\newcommand{\R}{\mathbb{R}}
\newcommand{\RO}{\mathcal{R}_0}
\newcommand{\f}{\varphi}
\newcommand{\propref}[1]{Proposition~\ref{proposition:#1}}
\newcommand{\proplab}[1]{\label{proposition:#1}}
\newtheorem{theorem}{Theorem}
\newtheorem{proposition}[theorem]{Proposition}
\newtheorem{lemma}[theorem]{Lemma}
\newtheorem{remark}{Remark}
\newcommand{\andrea}[1]{\textcolor{blue}{#1}}
\newcommand{\panos}[1]{\textcolor{magenta}{#1}}
\newcommand{\intr}{^{\wedge}}
\title{A geometric analysis of the SIRS model with secondary infections}
\author{Panagiotis Kaklamanos{$^1$}, Andrea Pugliese{$^2$}, Mattia Sensi{$^{3,4,*}$}, Sara Sottile{$^{2}$}\\[1em]
\small $^1$Maxwell Institute for Mathematical Sciences and School of Mathematics,
University of Edinburgh,\\\small James Clerk Maxwell Building, King's Buildings, Peter Guthrie Tait Road,\\\small 
Edinburgh, EH9 3FD,
United Kingdom\\
\small $^2$Dipartimento di Matematica, Universit\`a degli Studi di Trento\\
\small
Via Sommarive 14, 38123 Povo (Trento), Italy,\\
\small $^3$MathNeuro Team, Inria at Universit\' e C\^ote d'Azur, 2004 Rte des Lucioles, 06410 Biot, France\\
\small $^4$Politecnico di Torino, Corso Duca degli Abruzzi 24, 10129 Torino Italy\\
\small $^*$Corresponding author. Email: {\texttt{mattia.sensi@polito.it}}}
\date{\today}
\begin{document}

\maketitle

\begin{abstract}
    {We propose a compartmental model for a disease with temporary immunity and secondary infections. From our assumptions on the parameters involved in the model, the system naturally evolves in three time scales. We characterize the equilibria of the system and analyze their stability. We find conditions for the existence of two endemic equilibria, for some cases in which $\mathcal{R}_0 < 1$.
    Then, we unravel the interplay of the three time scales, providing conditions to foresee whether the system evolves in all three scales, or only in the fast and the intermediate ones. We conclude with numerical simulations and bifurcation analysis, to complement our analytical results.}
\end{abstract}

\noindent\textbf{Keywords:} fast-slow system, entry-exit function, epidemic model, geometric singular perturbation theory, non-standard form, bifurcation analysis\\
\\
\noindent\textbf{Mathematics Subject Classification:} 34C23, 34C60, 34E13, 34E15, 37N25, 92D30

\section{Introduction}

The foundation of the mathematical epidemics modelling based on compartmental models goes back to the XX century, based on the pioneering work by Kermack and McKendrick \cite{kermack1927contribution}. Since then, several generalizations have been proposed as attempts to develop more realistic models that take several other factors into account.

A fundamental distinction in epidemic model is between SIR and SIS models \cite{three_epi_mod}, with the former modelling infections providing complete immunity, and the latter those not providing any immunity. In reality, immunity may be only partial, making infections less likely but not impossible, or protecting from some consequences of infection (disease) but not from infection itself. Partial immunity may be due to immunity waning with time since infection, to a secondary infection caused by a pathogen similar but not identical to that of the primary infection, or simply to the limited immunity induced by the primary infection. 

Partial immunity and reinfections have drawn strong interest during the COVID-19 pandemic, but its causes are being debated \cite{Danchin2022,Gousseff20}, and the pattern is certainly very complex \cite{science_covid_immunity}. Several mathematical models have been devoted to partial immunity caused by infection waning, with the additional possibility of immunity boosting \cite{Dafilis2012a,jardon2021geometric1,Lavine2011a,Opoku-Sarkodie2022}, or to partial cross-immunity to a heterologous strain \cite{AndreasenLinLevin97,Nuno2005,Laurie2015,Castillo-Chavez:1989}. 

Few epidemic models have instead been devoted to the case where more that one infection episode is needed to provide complete immunity, although this is a mechanism recognized in the immunological literature \cite{Pascucci2021,Zarnitsyna2016} and is indeed consistent with the practice of performing vaccination in two doses. One may refer to \cite{Lopman14} in which the authors analyse data on \textit{Norovirus} prevalence, assuming that individuals can be infected any number of times, but only the first infection is symptomatic; or to \cite{Le2021} in which the impact of different assumptions about infection-derived immunity on disease dynamics is assessed.

In this paper, we consider an SIR epidemic model with secondary infections; after a primary infection, individuals have a strong transient immunity, at the end of which they become partially immune (i.e., partially susceptible) and may contract the disease again. A secondary infection provides a complete immunity, which however decays with time to  partial immunity; this too decays with time to complete susceptibility.  The model is described in detail in Sec.~\ref{sec_model}. Here it suffices to say that the model involves three different time scales: a fast time-scale (of the order of days) for the infections, an intermediate time-scale (of the order of months) for the transient immunity after a primary infection, and a slow  time-scale (of the order of years) in which complete or partial immunity are lost. Two important parameters determining the epidemic dynamics are $\nu$, the relative susceptibility of partially immune individuals, and $\alpha$ the relative infectiousness of secondary infections. If $\nu = 0$, secondary infections are impossible and the model reduces to an SIRS model (with gamma-distributed immune period); if $\alpha = 0$, secondary infections do not contribute to the force of infection, and the model reduces to an SIRWS with immunity waning and boosting, except that after a primary infection, individuals are only weakly immune.

In a recent paper \cite{steindorf2022modeling}, the authors consider a model allowing for secondary infections, with assumptions very similar to ours. The differences in the assumptions are that in \cite{steindorf2022modeling} $\nu$ is equal to $1$ (no difference in susceptibility between susceptible and partial immune individuals) and immunity does not wane; on the other hand, the authors consider host  demography (which we neglect for the sake of simplicity). Especially, the main focus of \cite{steindorf2022modeling} is the numerical exploration of model solutions, and numerical bifurcation analysis. The focus of the present paper is instead on exploiting the differences in time-scales to gather an analytical understanding of the model dynamics.

It has to be noted that primary and secondary infections are usually considered in models with multiple strains \cite{aguiar2008epidemiology,aguiar2011role,kooi2013bifurcation}, which, under conditions of symmetry reduce to models very similar to the one we consider here.

The presence of very different time-scales is typical of epidemic models. Consider, for instance, models which include both disease and demographic dynamics: typically, infectious periods have a much shorter duration than the average lifespan of the individuals in the population (weeks vs. years) \cite{andreasen1993effect,jardon2021geometric1,jardon2021geometric2}. Individuals behaviour or mobility may also evolve much faster than epidemics; several papers focus on this, both in continuous  \cite{castillo2016perspectives,dellamarca2023geometric,schecter2021geometric} and discrete time \cite{de2020discrete,bravo2021discrete}. As a further example, in vector-borne diseases the time scale associated with the vector dynamics is typically faster than host dynamics;  this difference is taken into account in some recent papers \cite{aguiar2021time,rashkov2021complexity,rashkov2019role} in which the authors perform the analysis using both the Quasi-Steady-State Approximation (QSSA) and Geometric Singular Perturbation Theory (GSPT).

The existence of different time scales is exploited, in a context  somewhat similar to the present paper, in \cite{rashkov2021complexity}; there a two-strain host-vector model is considered, leading (under some simplifying assumptions, such as the irrelevance of the order of infections) to a very high dimensional system (11 equations);  a dimensionality reduction  is then obtained through a quasi-steady state approximation, exploiting a natural difference in time scales between host and vector dynamics.

In this paper, we focus on the interplay of the three time scales involved in the system, using techniques from GSPT. A thorough description of the techniques we use can be found in \cite{jones1995geometric} or \cite{kuehn2015multiple}; for a concise introduction, we refer to the introductory sections of \cite{jardon2021geometric1}. 

The paper is organised as follows. In Section \ref{sec_model}, we introduce and describe a compartmental model for SIRS diseases with secondary infections. In Section \ref{sec_eqstab}, we first study the (local and global) stability of the Disease-Free Equilibrium (DFE) in terms  of the Basic Reproduction Number $\RO$, appropriately defined. Then, we  discuss the existence of endemic equilibria of the system, finding the conditions under which the system admits a unique positive equilibrium or two. In Sections \ref{sec_fast}, \ref{sec_inter}, and \ref{sec_slow}, we study the fast, intermediate, and slow dynamics of the model, respectively, in the context of GSPT. In particular, in Section \ref{sec:entrex} we introduce the entry-exit function and we give conditions for which the system enters the slow time scale or re-enters the fast scale from the intermediate one.  In Section \ref{sec_map} we define two discrete maps which summarize the behaviour of the system. The first describes the fast scale, the second describes only the intermediate or the intermediate and the slow scales, depending on the cases. Section \ref{sec:numeric} is devoted to numerical explorations. In particular, in Section \ref{sec_bifurc} we carry on the bifurcation analysis on the system and in Section \ref{sec_num} we perform numerical simulations in the case in which the systems admits both the Disease-Free Equilibrium and two endemic equilibria, to demonstrate the behaviour of the convergence of the trajectories.
We conclude the paper with a discussion in Section \ref{sec_conc}.  {Check these final sentences once we're sure about what we include}

\section{The model}\label{sec_model}
In this section, we propose a novel compartmental model for SIRS infections with secondary infections.
We partition the total population in six compartments, with respect to an ongoing epidemic:
\begin{itemize}
    \item $S$ represents the totally susceptible individuals;
    \item $I$ represents  individuals with a primary infection;
    \item $T$ represents the temporarily immune individuals, who recently recovered from a primary infection;
    \item $P$ represents the partially susceptible individuals, who have already recovered from a primary infection and lost the transient immunity;
    \item $Y$ represents individuals with a secondary infection;
    \item $R$ represents individuals who have recovered from a second infection, and are completely immune.
\end{itemize}
For the sake of simplicity, we do not consider demography in our model. We denote with $N=S+I+T+P+Y+R$ the total population. The system of ODEs, before further simplifications, is the following:
\begin{align}\label{sys}
    S'(t) =&\; -\beta \frac{S(t)}{N(t)}(I(t)+\alpha Y(t))+\eta_1 P(t), \nonumber \\
    I'(t) =&\; \beta \frac{S(t)}{N(t)}(I(t)+\alpha Y(t)) -\gamma_1 I(t),  \nonumber\\
    T'(t) =&\; \gamma_1 I(t) -\varepsilon T(t),\\
    P'(t) =&\; \varepsilon T(t) -\nu \beta \frac{P(t)}{N(t)}(I(t)+\alpha Y(t))-\eta_1 P(t) + \eta_2 R(t), \nonumber\\
    Y'(t) =&\; \nu \beta \frac{P(t)}{N(t)}(I(t)+\alpha Y(t)) -\gamma_2 Y(t), \nonumber\\
    R'(t) =&\; \gamma_2 Y(t) -\eta_2 R(t),  \nonumber
\end{align}
where the $'$ indicates the derivative with respect to the fast time scale $t$. The parameters of the system are the following: 
\begin{itemize}
    \item $\beta$ is the rate at which totally susceptible are infected by individuals in a primary infection;
    \item $\alpha$ is the relative infectiousness of individuals in a secondary infection, compared to those in a primary infection;
    \item $\nu$ is the relative susceptibility of partially immune individuals, compared to susceptibles;
   \item $\gamma_1$ is the recovery rate from primary infections, meaning on average a primary infection lasts $1/\gamma_1$;
     \item $\gamma_2$ is the recovery rate from secondary infections, which on average last $1/\gamma_2$;
    \item $0<\varepsilon \ll 1$ is the loss rate of temporary immunity;
    \item $0<\eta_1\ll \varepsilon$ is the loss rate of partial protection  ($P\rightarrow S$);
    \item $0<\eta_2\ll \varepsilon$ is the loss rate of total protection  ($R\rightarrow P$).
\end{itemize}
All the parameters are assumed to be positive. Since the  total population remains constant, as can be seen by observing $N'(t)=0$, we can divide all variables by $N$, which is equivalent to assuming
$N=1$.  

To present the three time scales involved more clearly, we substitute $\eta_1=\eta_2=\delta \varepsilon$, with $0<\delta,\varepsilon \ll 1$, having assumed, for the sake of simplicity, $\eta_1=\eta_2$.   The system is now in non-standard GSPT form with three-time scales, with $\varepsilon$ and $\delta$ representing our perturbation parameters, and hence the ratios between the time scales involved:
\begin{align}\label{sys_simpler}
    S'(t) =&\; -\beta S(t)(I(t)+\alpha Y(t))+\delta \varepsilon P(t), \nonumber \\
    I'(t) =&\; \beta S(t)(I(t)+\alpha Y(t)) -\gamma_1 I(t),  \nonumber\\
    T'(t) =&\; \gamma_1 I(t) -\varepsilon T(t),\\
    P'(t) =&\; \varepsilon T(t) -\nu \beta P(t)(I(t)+\alpha Y(t))-\delta \varepsilon P(t) + \delta \varepsilon R(t), \nonumber\\
    Y'(t) =&\; \nu \beta P(t)(I(t)+\alpha Y(t)) -\gamma_2 Y(t), \nonumber\\
    R'(t) =&\; \gamma_2 Y(t) -\delta \varepsilon R(t).  \nonumber
\end{align}
System \eqref{sys_simpler} evolves in the biologically relevant region
\begin{equation}\label{region}
    \Tilde{\Delta} := \{  (S,I,T,P,Y,R)\in \mathbb{R}^6\, | \, S,I,T,P,Y,R\geq 0,\   S+I+T+P+Y+R= 1\}.
\end{equation}

In the following, we drop the dependence of the compartments $S$, $I$, $T$, $P$, $Y$ and $R$ on the time variables, for ease of notation. We specify whenever the time variable is changed as a consequence of time rescaling.

\begin{center}
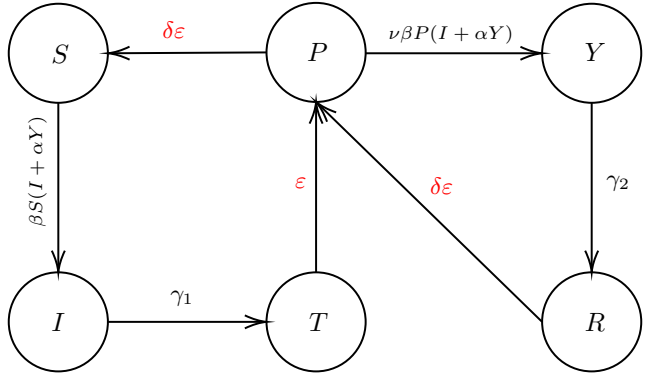
\begin{figure}[ht!]
\centering
\tikzset{every picture/.style={line width=0.75pt}} 
\begin{tikzpicture}[x=0.75pt,y=0.75pt,yscale=-1,xscale=1]
\draw   (170,36) .. controls (170,22.19) and (181.19,11) .. (195,11) .. controls (208.81,11) and (220,22.19) .. (220,36) .. controls (220,49.81) and (208.81,61) .. (195,61) .. controls (181.19,61) and (170,49.81) .. (170,36) -- cycle ;
\draw   (170,171.57) .. controls (170,157.76) and (181.19,146.57) .. (195,146.57) .. controls (208.81,146.57) and (220,157.76) .. (220,171.57) .. controls (220,185.38) and (208.81,196.57) .. (195,196.57) .. controls (181.19,196.57) and (170,185.38) .. (170,171.57) -- cycle ;
\draw   (300,171.57) .. controls (300,157.76) and (311.19,146.57) .. (325,146.57) .. controls (338.81,146.57) and (350,157.76) .. (350,171.57) .. controls (350,185.38) and (338.81,196.57) .. (325,196.57) .. controls (311.19,196.57) and (300,185.38) .. (300,171.57) -- cycle ;
\draw   (300,35.57) .. controls (300,21.76) and (311.19,10.57) .. (325,10.57) .. controls (338.81,10.57) and (350,21.76) .. (350,35.57) .. controls (350,49.38) and (338.81,60.57) .. (325,60.57) .. controls (311.19,60.57) and (300,49.38) .. (300,35.57) -- cycle ;
\draw   (439,36) .. controls (439,22.19) and (450.19,11) .. (464,11) .. controls (477.81,11) and (489,22.19) .. (489,36) .. controls (489,49.81) and (477.81,61) .. (464,61) .. controls (450.19,61) and (439,49.81) .. (439,36) -- cycle ;
\draw   (439,171.57) .. controls (439,157.76) and (450.19,146.57) .. (464,146.57) .. controls (477.81,146.57) and (489,157.76) .. (489,171.57) .. controls (489,185.38) and (477.81,196.57) .. (464,196.57) .. controls (450.19,196.57) and (439,185.38) .. (439,171.57) -- cycle ;
\draw    (220,171.57) -- (298,171.57) ;
\draw [shift={(300,171.57)}, rotate = 180] [color={rgb, 255:red, 0; green, 0; blue, 0 }  ][line width=0.75]    (10.93,-3.29) .. controls (6.95,-1.4) and (3.31,-0.3) .. (0,0) .. controls (3.31,0.3) and (6.95,1.4) .. (10.93,3.29)   ;
\draw    (439,171.57) -- (326.43,61.97) ;
\draw [shift={(325,60.57)}, rotate = 44.24] [color={rgb, 255:red, 0; green, 0; blue, 0 }  ][line width=0.75]    (10.93,-3.29) .. controls (6.95,-1.4) and (3.31,-0.3) .. (0,0) .. controls (3.31,0.3) and (6.95,1.4) .. (10.93,3.29)   ;
\draw    (350,36) -- (437,36) ;
\draw [shift={(439,36)}, rotate = 180] [color={rgb, 255:red, 0; green, 0; blue, 0 }  ][line width=0.75]    (10.93,-3.29) .. controls (6.95,-1.4) and (3.31,-0.3) .. (0,0) .. controls (3.31,0.3) and (6.95,1.4) .. (10.93,3.29)   ;https://www.overleaf.com/project/62690b01bdc14130141fb971
\draw    (195,61) -- (195,144.57) ;
\draw [shift={(195,146.57)}, rotate = 270] [color={rgb, 255:red, 0; green, 0; blue, 0 }  ][line width=0.75]    (10.93,-3.29) .. controls (6.95,-1.4) and (3.31,-0.3) .. (0,0) .. controls (3.31,0.3) and (6.95,1.4) .. (10.93,3.29)   ;
\draw    (300,35.57) -- (222,35.99) ;
\draw [shift={(220,36)}, rotate = 359.69] [color={rgb, 255:red, 0; green, 0; blue, 0 }  ][line width=0.75]    (10.93,-3.29) .. controls (6.95,-1.4) and (3.31,-0.3) .. (0,0) .. controls (3.31,0.3) and (6.95,1.4) .. (10.93,3.29)   ;
\draw    (464,61) -- (464,144.57) ;
\draw [shift={(464,146.57)}, rotate = 270] [color={rgb, 255:red, 0; green, 0; blue, 0 }  ][line width=0.75]    (10.93,-3.29) .. controls (6.95,-1.4) and (3.31,-0.3) .. (0,0) .. controls (3.31,0.3) and (6.95,1.4) .. (10.93,3.29)   ;
\draw    (325,146.57) -- (325,62.57) ;
\draw [shift={(325,60.57)}, rotate = 90] [color={rgb, 255:red, 0; green, 0; blue, 0 }  ][line width=0.75]    (10.93,-3.29) .. controls (6.95,-1.4) and (3.31,-0.3) .. (0,0) .. controls (3.31,0.3) and (6.95,1.4) .. (10.93,3.29)   ;

\draw (178.4,125) node [anchor=north west][inner sep=0.75pt]  [font=\scriptsize,rotate=-270]  {$\beta S( I+\alpha Y)$};
\draw (246,18.4) node [anchor=north west][inner sep=0.75pt]  [font=\small]  {\textcolor{red}{$\delta \varepsilon$}};
\draw (360,20.4) node [anchor=north west][inner sep=0.75pt]  [font=\scriptsize]  {$\nu \beta P( I+\alpha Y)$};
\draw (250,154.4) node [anchor=north west][inner sep=0.75pt]  [font=\small]  {$\gamma _{1}$};
\draw (320,165.4) node [anchor=north west][inner sep=0.75pt]    {$T$};
\draw (190,165.4) node [anchor=north west][inner sep=0.75pt]    {$I$};
\draw (319,29.4) node [anchor=north west][inner sep=0.75pt]    {$P$};
\draw (459,29.4) node [anchor=north west][inner sep=0.75pt]    {$Y$};
\draw (459,165.4) node [anchor=north west][inner sep=0.75pt]    {$R$};
\draw (381,97.4) node [anchor=north west][inner sep=0.75pt]  [font=\small]  {\textcolor{red}{$\delta \varepsilon$}};
\draw (470,96.4) node [anchor=north west][inner sep=0.75pt]  [font=\small]  {$\gamma _{2}$};
\draw (313,97.4) node [anchor=north west][inner sep=0.75pt]  [font=\small]  {\textcolor{red}{$\varepsilon$}};
\draw (190,30.4) node [anchor=north west][inner sep=0.75pt]    {$S$};
\end{tikzpicture}
    \caption{Flow for system \eqref{sys_simpler}. Notice that, in the $\lim_{\varepsilon\rightarrow 0}$, the SIT and PYR groups are decoupled. Indeed, the only way to go from one group to the other is through $\mathcal{O}(\varepsilon)$ passage, as highlighted by the red parameters.}
    \label{fig:flow_tot}
\end{figure}

\end{center}

Indeed, one can notice that system \eqref{sys} evolves on three distinct time scales: the fast time scale $t$, an intermediate time scale $\tau_1 = \varepsilon t$ and a slow time scale $\tau_2 = \delta \tau_1 = \delta\varepsilon t $.

Since the total population is constant, we can reduce  the dimensionality of the system from $6$ to $5$; for consistency with \cite{jardon2021geometric1,jardon2021geometric2}, we remove the $R$ compartment, substituting it via $R=1-S-I-T-P-Y$. System \eqref{sys_simpler} then becomes
\begin{align}\label{red_sys}
    S' =&\; -\beta S(I+\alpha Y)+\delta \varepsilon P, \nonumber \\
    I' =&\; \beta S(I+\alpha Y) -\gamma_1 I,  \nonumber\\
    T' =&\; \gamma_1 I -\varepsilon T,\\
    P' =&\; \delta\varepsilon + \varepsilon T(1-\delta) -\nu \beta P(I+\alpha Y)-\delta \varepsilon (S+I+2P+Y), \nonumber\\
    Y' =&\; \nu \beta P(I+\alpha Y) -\gamma_2 Y.  \nonumber
\end{align}

System \eqref{red_sys} evolves in the biologically relevant region
\begin{equation}\label{rel_region}
    \Delta := \{  (S,I,T,P,Y)\in \mathbb{R}^5\, | \, S,I,T,P,Y\geq 0,\  S+I+T+P+Y \leq 1\},
\end{equation}
as proven in the following proposition.
\begin{proposition}
The set \eqref{rel_region} is forward invariant for orbits of system \eqref{red_sys}.
\end{proposition}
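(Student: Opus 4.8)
The plan is to invoke the standard subtangentiality criterion for forward invariance. Since $\Delta$ in \eqref{rel_region} is a compact convex polytope --- the intersection of the six closed half-spaces $\{S\ge0\}$, $\{I\ge0\}$, $\{T\ge0\}$, $\{P\ge0\}$, $\{Y\ge0\}$ and $\{S+I+T+P+Y\le1\}$ --- and the right-hand side of \eqref{red_sys} is polynomial, hence locally Lipschitz, by Nagumo's theorem it suffices to show that on the relative interior of each of these six facets the vector field does not point strictly outward, i.e. that the directional derivative of the active constraint function along \eqref{red_sys} has the appropriate sign. Points on lower-dimensional faces need no separate treatment: there the tangent cone to $\Delta$ is the intersection of the half-space tangent cones of the facets through the point, and the facet computations put the vector field in each of them.

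First I would dispatch the five coordinate facets, which are immediate. On $\{S=0\}$ one reads off $S'=\delta\varepsilon P\ge0$; on $\{I=0\}$, $I'=\alpha\beta SY\ge0$; on $\{T=0\}$, $T'=\gamma_1 I\ge0$; on $\{Y=0\}$, $Y'=\nu\beta PI\ge0$ --- each nonnegative because the remaining coordinates are nonnegative throughout $\Delta$. Hence no trajectory can leave $\Delta$ through any of these faces.

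The facet $\{P=0\}$ needs a short argument. There \eqref{red_sys} gives
\[
P'=\delta\varepsilon+\varepsilon(1-\delta)T-\delta\varepsilon(S+I+Y)=\delta\varepsilon\bigl(1-S-I-Y\bigr)+\varepsilon(1-\delta)T ,
\]
and since $0<\delta<1$ the second term is $\ge0$, while on $\Delta\cap\{P=0\}$ one has $S+I+T+Y\le1$, so $1-S-I-Y\ge T\ge0$ and therefore $P'\ge0$. Finally, for the facet $\{S+I+T+P+Y=1\}$, which is precisely $\{R=0\}$ with $R:=1-S-I-T-P-Y$, I would add the five equations of \eqref{red_sys}: all the nonlinear force-of-infection terms cancel and one is left with $(S+I+T+P+Y)'=\delta\varepsilon-\delta\varepsilon(S+I+T+P+Y)-\gamma_2 Y$, which on this facet equals $-\gamma_2 Y\le0$; equivalently, $R'=\gamma_2 Y-\delta\varepsilon R=\gamma_2 Y\ge0$ when $R=0$. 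Thus the outward normal derivative is nonpositive on every facet and Nagumo's theorem yields forward invariance of $\Delta$.

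I do not expect a genuine obstacle here. The only steps requiring care are: the $\{P=0\}$ facet, where $P'\ge0$ is not visible term by term and one must use both $\delta<1$ and the ambient inequality $S+I+T+Y\le1$; the algebraic cancellation on the $\{R=0\}$ facet; and the bookkeeping needed to pass rigorously from ``the vector field is inward-pointing on each facet'' to genuine forward invariance --- most cleanly handled by citing Nagumo's theorem, or, if an elementary argument is preferred, by the usual first-exit-time contradiction applied to whichever constraint function vanishes at the putative exit time, whose sign is supplied by the computations above.
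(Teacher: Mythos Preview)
Your proposal is correct and follows essentially the same approach as the paper: check that $X'\big|_{X=0}\ge 0$ for each coordinate $X\in\{S,I,T,P,Y\}$ and that $(S+I+T+P+Y)'\big|_{S+I+T+P+Y=1}=-\gamma_2 Y\le 0$. Your version is in fact more careful: the paper dismisses all five coordinate facets with ``it is easy to see'', whereas you rightly single out the facet $\{P=0\}$ as the only one where a short argument (using $\delta<1$ and $S+I+T+Y\le 1$) is actually needed, and you make the invariance conclusion rigorous by invoking Nagumo's theorem rather than leaving it implicit.
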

\begin{proof}
It is easy to see that, for $X=S,I,T,P,Y$, one has
$$
X'|_{X=0}\geq 0.
$$
Moreover, if we write $Z=S+I+T+P+Y$, we have
$$
Z'|_{Z=1}=-\gamma_2 Y \leq 0,
$$
which was to be expected, since the only outwards flow from $Z$ is $-\gamma_2 Y$ from the $Y$ to the $R$ compartment, recall Figure \ref{fig:flow_tot}. This concludes the proof.
\end{proof}
In the following, we almost always work on the $5-$dimensional system \eqref{red_sys}; however, sometimes, e.g. in Theorem \ref{existence_EE}, it will be useful to consider the last equation of \eqref{sys_simpler} .

\section{Equilibria and 
stability}\label{sec_eqstab}

\subsection{Disease-Free Equilibrium}
The Disease-Free Equilibrium (DFE) of \eqref{red_sys}, i.e. the equilibrium in which $I=Y=0$, can be computed as $x_0 = (S_0,I_0,T_0,P_0,Y_0)=(1,0,0,0,0)$ from easy calculations. 
We now use the Next Generation Matrix method \cite{van2002repnum} to find the value of the Basic Reproduction Number, denoted by $\RO$.

\begin{proposition}\proplab{BRN}
The Basic Reproduction Number of system \eqref{red_sys} is $\RO=\frac{\beta}{\gamma_1}$.
\end{proposition}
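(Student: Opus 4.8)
The plan is to apply the Next Generation Matrix method of van den Driessche and Watmough \cite{van2002repnum}, as announced just before the statement. The infected compartments of \eqref{red_sys} are $I$ (primary infection) and $Y$ (secondary infection), so I would first write the $(I,Y)$-subsystem in the form $(I,Y)' = \mathcal{F} - \mathcal{V}$, where $\mathcal{F}$ collects the terms describing \emph{new} infections and $\mathcal{V}$ the remaining transfer terms, namely
\[
\mathcal{F} = \begin{pmatrix} \beta S(I+\alpha Y) \\ \nu\beta P(I+\alpha Y) \end{pmatrix}, \qquad
\mathcal{V} = \begin{pmatrix} \gamma_1 I \\ \gamma_2 Y \end{pmatrix}.
\]
Here the flux $P \to Y$ is counted as a new infection — it genuinely is one — even though, as we will see, it contributes nothing to the final value because $P_0 = 0$ at the DFE.

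Next I would linearise at $x_0 = (1,0,0,0,0)$, computing $F = D_{(I,Y)}\mathcal{F}(x_0)$ and $V = D_{(I,Y)}\mathcal{V}(x_0)$. Since $S_0 = 1$ and $P_0 = 0$, this yields
\[
F = \begin{pmatrix} \beta & \alpha\beta \\ 0 & 0 \end{pmatrix}, \qquad V = \begin{pmatrix} \gamma_1 & 0 \\ 0 & \gamma_2 \end{pmatrix},
\]
the vanishing second row of $F$ being exactly the consequence of there being no partially immune individuals at the DFE. Then $V$ is trivially invertible, and
\[
FV^{-1} = \begin{pmatrix} \beta/\gamma_1 & \alpha\beta/\gamma_2 \\ 0 & 0 \end{pmatrix},
\]
a rank-one matrix with eigenvalues $0$ and $\beta/\gamma_1$. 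Hence $\RO = \rho(FV^{-1}) = \beta/\gamma_1$, as claimed.

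To present this rigorously one should also record that the hypotheses of \cite{van2002repnum} are met: nonnegativity of $\mathcal{F}$, the sign structure of $-V$, and the fact that the disease-free subsystem (the $S,T,P$ dynamics with $I=Y=0$) has $x_0$ as a locally asymptotically stable equilibrium — all immediate from the structure of \eqref{red_sys}. I expect the only point deserving an explicit sentence is the splitting of $\mathcal{F}$ and $\mathcal{V}$, and in particular the observation that the secondary-infection term $\nu\beta P(I+\alpha Y)$, although a bona fide ``new infection'' term, drops out of $F$ at the DFE; the rest is a one-line $2\times2$ computation with no real obstacle.
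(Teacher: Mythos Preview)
Your proposal is correct and follows essentially the same approach as the paper: identical choice of infected compartments, identical $\mathcal{F}/\mathcal{V}$ splitting, and the same $2\times 2$ computation of $FV^{-1}$ leading to $\RO=\beta/\gamma_1$. Your additional remark about verifying the van den Driessche--Watmough hypotheses is a welcome refinement, but otherwise the argument matches the paper's proof line by line.
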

\begin{proof}
Recall that system \eqref{red_sys} has two disease compartments, namely $I$ and $Y$. We can write
\begin{align*}
\frac{d I}{dt} &=  \mathcal{F}_1(x) - \mathcal{V}_1(x) , \\ \nonumber
     \frac{d Y}{dt} &= \mathcal{F}_2(x) - \mathcal{V}_2(x), \\
\end{align*}
where $x = (S,I,T,P,Y)$ and
\begin{align*}
\mathcal{F}_1(x) = \beta S( I + \alpha Y), &\qquad  \mathcal{V}_1(x) =\gamma_1 I,\\ \nonumber 
\mathcal{F}_2(x) = \nu \beta P( I + \alpha Y), &\qquad  \mathcal{V}_2(x) = \gamma_2 Y. \\ 
\end{align*}
Thus we obtain
\begin{equation}
F = \left( \begin{matrix} \label{mat}
\dfrac{\partial \mathcal{F}_1}{\partial I}(x_0) & \dfrac{\partial \mathcal{F}_1}{\partial Y}(x_0) \\ \\
\dfrac{\partial \mathcal{F}_2}{\partial I}(x_0) & \dfrac{\partial \mathcal{F}_2}{\partial Y}(x_0)
\end{matrix} \right) = \left( 
\begin{matrix}
\beta & \beta \alpha\\
0 & 0  \\
\end{matrix}\right) \qquad \text{and} \qquad  V = \left( \begin{matrix} 
\dfrac{\partial \mathcal{V}_1}{\partial I}(x_0) & \dfrac{\partial \mathcal{V}_1}{\partial Y}(x_0) \\ \\
\dfrac{\partial \mathcal{V}_2}{\partial I}(x_0) & \dfrac{\partial \mathcal{V}_2}{\partial Y}(x_0)
\end{matrix} \right) = \left( 
\begin{matrix}
\gamma_1 & 0\\
0 & \gamma_2  \\
\end{matrix}\right).
\end{equation}
Therefore, the next generation matrix, defined as $M := FV^{-1}$, is
\begin{equation}
    M =  \left( \begin{matrix}
    \dfrac{\beta}{\gamma_1} & \dfrac{\beta \alpha}{\gamma_2} \\ \\
    0 & 0 
    \end{matrix}\right),
\end{equation}
from which
\begin{equation}
    \RO := \rho(M) = \dfrac{\beta}{\gamma_1},
\end{equation}
where $\rho(\cdot)$ denotes the spectral radius of a matrix.
\end{proof}

\begin{remark}
Notice that the expression of the Basic Reproduction Number depends only on the first $S \rightarrow I \rightarrow T$ flow and not to the second part of the dynamics. However, as we will see later, in the fast time-scale we identify an expression of a second, ``fast'' Basic Reproduction Number, denoted by $\RO^{f}$, which depends also on the second flow $P \rightarrow Y \rightarrow R$.
\end{remark}

As a direct consequence of \propref{BRN}, we have the following lemma:

\begin{lemma}\label{lemma_DFE}
The DFE is locally asymptotically stable if $\RO <1$, and unstable if $\RO >1$.
\end{lemma}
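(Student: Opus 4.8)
The plan is to reduce the statement to a standard consequence of the Next Generation Matrix theory. Having already computed in \propref{BRN} that $\RO = \beta/\gamma_1 = \rho(FV^{-1})$, the natural approach is to invoke the classical result of van den Driessche and Watmough \cite{van2002repnum}: under the structural hypotheses on the splitting $f = \mathcal{F} - \mathcal{V}$ (nonnegativity of $\mathcal{F}$, the sign conditions on $\mathcal{V}$ restricted to and off the disease-free subspace, and the fact that the disease-free subspace is invariant), the DFE is locally asymptotically stable when $\RO < 1$ and unstable when $\RO > 1$. So the first step is simply to check that these hypotheses (conditions (A1)--(A5) in \cite{van2002repnum}) are satisfied for system \eqref{red_sys} with the $\mathcal{F}, \mathcal{V}$ identified in the proof of \propref{BRN}; this is routine since $\mathcal{F}_1, \mathcal{F}_2 \geq 0$ on $\Delta$, $\mathcal{V}$ is linear with the right sign pattern, and $I = Y = 0$ is invariant.

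Alternatively — and this may be cleaner given the explicit low dimension — I would linearize \eqref{red_sys} directly at $x_0 = (1,0,0,0,0)$. The Jacobian there is block triangular: the ``infected'' block governing $(I, Y)$ decouples (to leading order) from the rest because $\partial I'/\partial(S,T,P)$ and $\partial Y'/\partial(S,T,P)$ vanish at the DFE once one uses $I = Y = 0$. The infected block is precisely $F - V = \begin{pmatrix} \beta - \gamma_1 & \alpha\beta \\ 0 & -\gamma_2 \end{pmatrix}$, whose eigenvalues are $\beta - \gamma_1$ and $-\gamma_2$; the former is negative exactly when $\RO = \beta/\gamma_1 < 1$. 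The complementary block, governing $(S, T, P)$, has characteristic polynomial with roots near $0, -\varepsilon, -O(\delta\varepsilon)$ — more precisely, one eigenvalue is $0$ (reflecting the conservation law $N = 1$, which is why we already quotiented to $\Delta$; restricted to $\Delta$ this direction is absent) and the others have negative real part for small $\varepsilon, \delta > 0$. One should note that the single zero eigenvalue lives in the direction transverse to $\Delta$ (the $N$-direction), so on the invariant region $\Delta$ all eigenvalues have strictly negative real part when $\RO < 1$, giving local asymptotic stability; and when $\RO > 1$ the eigenvalue $\beta - \gamma_1 > 0$ gives instability.

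I would present the short version: state that the hypotheses of Theorem~2 of \cite{van2002repnum} hold for the chosen decomposition (a one-line verification), and conclude directly. The main — and really the only — obstacle is bookkeeping about the zero eigenvalue coming from the conservation of total population: one must be careful to argue on the reduced system \eqref{red_sys} on $\Delta$ rather than on the six-dimensional \eqref{sys_simpler}, so that ``locally asymptotically stable'' is not spoiled by a spurious center direction. Since the reduction to \eqref{red_sys} has already been carried out and $\Delta$ shown forward invariant, this is handled by simply working there. Everything else is immediate from \propref{BRN}.
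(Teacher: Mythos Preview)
Your approach is essentially the same as the paper's: direct linearization of \eqref{red_sys} at $x_0$ and inspection of the eigenvalues, exploiting the block-triangular structure (the $(I,Y)$ block decouples, with eigenvalues $\beta-\gamma_1$ and $-\gamma_2$). The van den Driessche--Watmough route you sketch first is a valid alternative, but the paper opts for the explicit Jacobian computation.

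One correction on the bookkeeping: there is \emph{no} zero eigenvalue in the $(S,T,P)$ block of the 5-dimensional system \eqref{red_sys}. Since $R$ has already been eliminated, the conservation-law direction is gone; the $3\times 3$ block at the DFE is
\[
\begin{pmatrix}
0 & 0 & \delta\varepsilon \\
0 & -\varepsilon & 0 \\
-\delta\varepsilon & \varepsilon(1-\delta) & -2\delta\varepsilon
\end{pmatrix},
\]
with characteristic polynomial $-(\lambda+\varepsilon)(\lambda+\delta\varepsilon)^2$, giving eigenvalues $-\varepsilon,\ -\delta\varepsilon,\ -\delta\varepsilon$ exactly (these are the $\lambda_3,\lambda_4,\lambda_5$ of the paper). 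So the ``spurious center direction'' worry you raise is already taken care of by the reduction to \eqref{red_sys}, and no further argument is needed there.
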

\begin{proof}
The Jacobian matrix of \eqref{red_sys} computed in the DFE $x_0$ is
\begin{equation*}
   J_{|x_0}= \begin{pmatrix}
    0 & -\beta & 0 & \delta \varepsilon & -\beta \alpha\\
    0 & \beta - \gamma_1 & 0 & 0 & \beta \alpha\\
    0 & \gamma_1 & - \varepsilon & 0 & 0 \\
    -\delta \varepsilon & - \delta \varepsilon & \varepsilon - \delta \varepsilon & - 2 \delta \varepsilon & - \delta \varepsilon \\
    0 & 0 & 0 & 0 & - \gamma_2
    \end{pmatrix}
\end{equation*}
The eigenavalues of $J_{|x_0}$ can be easily computed as 
\begin{equation*}
    \lambda_1 = -\gamma_2, \quad \lambda_2 = -\gamma_1(1-\RO), \quad \lambda_3 = -\varepsilon, \quad \lambda_{4}=\lambda_5 = - \delta \varepsilon.
\end{equation*}
Thus, if $\RO <1$ all the eigenvalues are negative and thus the DFE is locally asymptotically stable. If, instead, $\RO >1$, $\lambda_2>0$ and the DFE loses local stability.
\end{proof}
With an additional condition on the product of the secondary infection parameters $\alpha\nu$, we are able to prove the following global stability result for the DFE.
\begin{theorem}\label{glob_stab}
Assume that $\gamma_1\leq \gamma_2$, and that $\RO < 1$. Then, the DFE is globally exponentially stable if $\alpha \nu < \dfrac{1}{\RO}.$
\end{theorem}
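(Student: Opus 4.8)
The plan is to construct a Lyapunov function that dominates the two infected compartments $I$ and $Y$ simultaneously and show it decays exponentially. A natural candidate is a linear combination $L = I + c\,Y$ for a suitable constant $c>0$ to be chosen. Differentiating along trajectories of \eqref{red_sys} and using $S\le 1$, $P\le 1$ on $\Delta$, one gets
\begin{align*}
L' &= \beta S(I+\alpha Y) - \gamma_1 I + c\bigl(\nu\beta P(I+\alpha Y) - \gamma_2 Y\bigr)\\
&\le \beta(I+\alpha Y) + c\nu\beta(I+\alpha Y) - \gamma_1 I - c\gamma_2 Y\\
&= \bigl(\beta(1+c\nu) - \gamma_1\bigr) I + \bigl(\alpha\beta(1+c\nu) - c\gamma_2\bigr) Y.
\end{align*}
The goal is to pick $c>0$ so that both bracketed coefficients are strictly negative; then $L' \le -\kappa L$ for some $\kappa>0$, which forces $I,Y \to 0$ exponentially, and the remaining variables $T,P$ (and $S\to1$, $R\to0$) then follow from the structure of the equations and LaSalle-type arguments or direct integration on the slow scales. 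The hypothesis $\gamma_1 \le \gamma_2$ should be what makes a single multiplier $c$ work for both inequalities at once.

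First I would pin down the admissible range of $c$. The first coefficient is negative iff $\beta(1+c\nu) < \gamma_1$, i.e. $c\nu < \gamma_1/\beta - 1 = 1/\RO - 1$ (using $\RO = \beta/\gamma_1$ from \propref{BRN}), which requires $\RO<1$ and gives $c < (1/\RO - 1)/\nu$. The second coefficient is negative iff $c > \alpha\beta(1+c\nu)/\gamma_2$, i.e. $c(\gamma_2 - \alpha\beta\nu) > \alpha\beta$; assuming provisionally $\gamma_2 > \alpha\beta\nu$ this gives $c > \alpha\beta/(\gamma_2 - \alpha\beta\nu)$. So a valid $c$ exists precisely when
\[
\frac{\alpha\beta}{\gamma_2 - \alpha\beta\nu} < \frac{1/\RO - 1}{\nu}.
\]
The main work is to show this interval is nonempty exactly under the stated hypothesis $\alpha\nu < 1/\RO$ together with $\gamma_1\le\gamma_2$ and $\RO<1$. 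I expect this to reduce, after clearing denominators and using $\gamma_1 = \beta/\RO \le \gamma_2$, to the single clean inequality $\alpha\nu\RO < 1$; the inequality $\gamma_1\le\gamma_2$ is used to bound $\beta = \gamma_1\RO \le \gamma_2\RO$ and thereby control the cross term $\alpha\beta\nu$. It is worth double-checking the edge case $\gamma_2 \le \alpha\beta\nu$: here the second coefficient cannot be made negative by this method, so one must verify that $\gamma_1\le\gamma_2$ and $\alpha\nu\RO<1$ actually preclude it (indeed $\alpha\beta\nu = \alpha\nu\RO\gamma_1 < \gamma_1 \le \gamma_2$, so this case never arises).

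Once $L' \le -\kappa L$ is established on $\Delta$ with $\kappa = \min\{\gamma_1 - \beta(1+c\nu),\ (c\gamma_2 - \alpha\beta(1+c\nu))/c\} > 0$ (or any explicit positive lower bound), Gronwall gives $I(t)+cY(t) \le (I(0)+cY(0))e^{-\kappa t}$, hence exponential decay of $I$ and $Y$. For the full ``globally exponentially stable'' claim I would then feed this back: $T' = \gamma_1 I - \varepsilon T$ with $I$ decaying exponentially yields exponential decay of $T$; similarly the $P$ equation, driven by the exponentially small force of infection and the small parameters, gives $P\to 0$, and finally $S = 1 - I - T - P - Y - R \to 1$. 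The main obstacle is purely the algebraic bookkeeping in choosing $c$ and verifying the interval is nonempty under the hypotheses; the dynamical part is routine once the comparison inequality is in hand. An alternative to choosing $c$ by hand is to note that the relevant $2\times2$ coefficient matrix $\begin{psmallmatrix}\beta-\gamma_1 & \alpha\beta\\ \nu\beta & \nu\alpha\beta-\gamma_2\end{psmallmatrix}$ (the comparison system obtained from $S\le1$, $P\le1$) must be Hurwitz; its trace is $\beta-\gamma_1+\nu\alpha\beta-\gamma_2 < 0$ under the hypotheses and its determinant is $(\gamma_1-\beta)\gamma_2 - \gamma_1\nu\alpha\beta = \gamma_1\gamma_2(1 - \RO - \alpha\nu\RO\cdot\tfrac{\gamma_1}{\gamma_1})$... more carefully $= \gamma_1\gamma_2 - \beta\gamma_2 - \gamma_1\nu\alpha\beta$, which is positive iff $1 > \RO + \alpha\nu\RO\gamma_1/\gamma_2$; since $\gamma_1/\gamma_2\le1$ this is implied by $1 > \RO + \alpha\nu\RO \ge \RO$ — wait, that needs $\RO + \alpha\nu\RO < 1$, slightly stronger than claimed, so I would instead keep $\gamma_1/\gamma_2$ and use $\RO<1$, $\alpha\nu\RO<1$ to get $\RO + \alpha\nu\RO\gamma_1/\gamma_2 < \RO + (1-\RO) = 1$ after bounding $\alpha\nu\RO\gamma_1/\gamma_2 \le (1/?)$; this final bound is the one delicate point and is where $\gamma_1\le\gamma_2$ enters essentially. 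Either route — explicit Lyapunov multiplier or Hurwitz comparison matrix — should close the argument.
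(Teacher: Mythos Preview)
Your approach has a genuine gap, and you actually put your finger on it yourself near the end without resolving it. Bounding $S\le 1$ and $P\le 1$ \emph{separately} throws away the simplex constraint $S+P\le 1$, and that information is essential. Concretely, your condition for the interval of admissible $c$ to be nonempty,
\[
\frac{\alpha\beta}{\gamma_2-\alpha\beta\nu}<\frac{1/\RO-1}{\nu},
\]
simplifies (clear denominators, use $\beta=\gamma_1\RO$) to
\[
\RO+\alpha\nu\,\RO\,\frac{\gamma_1}{\gamma_2}<1,
\]
which is exactly the determinant condition you found for the Hurwitz comparison matrix. This is \emph{strictly stronger} than the hypotheses $\RO<1$ and $\alpha\nu\RO<1$: for instance take $\gamma_1=\gamma_2$, $\RO=0.6$, $\alpha\nu=1.5$; then $\alpha\nu\RO=0.9<1$ but $\RO+\alpha\nu\RO=1.5>1$. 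So under the theorem's hypotheses no admissible $c$ exists in general, and neither the Lyapunov-multiplier route nor the comparison-matrix route closes as stated.

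The fix is not to search for $c$ but to take the specific weight $c=\alpha$, because that is the one value for which the force-of-infection terms factor cleanly:
\[
(I+\alpha Y)'=\beta(S+\alpha\nu P)(I+\alpha Y)-\gamma_1 I-\alpha\gamma_2 Y\le \bigl(\beta(S+\alpha\nu P)-\gamma_1\bigr)(I+\alpha Y),
\]
the last step using $\gamma_1\le\gamma_2$. Now you bound the scalar $S+\alpha\nu P$ rather than $S$ and $P$ individually: if $\alpha\nu\le 1$ then $S+\alpha\nu P\le S+P<1<1/\RO$; if $\alpha\nu>1$ then $S+\alpha\nu P\le \alpha\nu(S+P)\le\alpha\nu<1/\RO$. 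Either way $(I+\alpha Y)'\le -\kappa(I+\alpha Y)$ with $\kappa>0$, and the rest of your argument (decay of $T$, $P$, $R$; $S\to 1$) goes through. The missing idea is precisely the factorisation at $c=\alpha$ combined with the simplex bound on $S+\alpha\nu P$.
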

\begin{proof}
Clearly, by assumption $\dfrac{1}{\RO} > 1$.
Let us consider 
\begin{equation*}
    \begin{split}
    (I(t)+\alpha Y(t))' =& \beta S(t)\left(I(t)+\alpha Y(t)\right) - \gamma_1 I(t) + \alpha \nu \beta P(t) \left(I(t)+\alpha Y(t)\right) - \gamma_2 \alpha Y(t)\\
    \leq & \left(I(t)+\alpha Y(t)\right) \left( \beta (S(t) + \alpha \nu P(t)) - \gamma_1 \right) \\
    = & \beta \left(I(t)+\alpha Y(t)\right) \left(  (S(t) + \alpha \nu P(t)) - \dfrac{1}{\RO}\right).
    \end{split}
\end{equation*}
We now distinguish between two cases.

If $\alpha \nu \leq 1$, then
$$
S+\alpha \nu P \leq  S+P <1,
$$
since in the case $S+P=1$, necessarily $I+\alpha Y=0$, which implies both $I=0$ and $Y=0$, and the manifold representing absence of infection is clearly forward invariant.
Thus 
$$
 (I(t)+\alpha Y(t))' <  \beta (I(t)+\alpha Y(t)) \left( 1- \dfrac{1}{\RO}\right),
$$
and
$$
\lim_{t \to \infty} I(t)+ \alpha Y(t) = 0 \text{ exponentially, since }\RO < 1.
$$
If instead $\alpha \nu > 1$, then 
$$
S+\alpha \nu P < S + \alpha\nu (1-S) = S(1 - \alpha \nu) + \alpha \nu \leq \alpha \nu ,
$$
where the equality $S+\alpha \nu P = S + \alpha\nu (1-S)$ is not considered for the aforementioned reason. Thus  
$$
(I(t) + \alpha Y(t))' < \beta (I(t) + \alpha Y(t)) \left( \alpha \nu - \dfrac{1}{\RO} \right),
$$
and it follows that
$$
\lim_{t \to \infty} I(t)+ \alpha Y(t) = 0 \text{ exponentially if } \alpha \nu < \dfrac{1}{\RO}.
$$
On the set $\{I=Y=0\}$, $T$, $P$ and $R$ converge to $0$, and $S\rightarrow 1$. We can conclude that the DFE is exponentially stable if $\alpha \nu < \dfrac{1}{\RO}$.
\end{proof}
\begin{remark}
    As a consequence of Lemma \ref{lemma_DFE}, the DFE is always locally stable when $\RO<1$. However, even limiting ourselves to the case $\gamma_1 = \gamma_2$, when $\alpha \nu > 1$ and $1/{\alpha \nu} \le \RO < 1$, Theorem \ref{glob_stab} does not apply. Indeed, it is possible, as shown in the next Section, that in such cases there exist also endemic equilibria. In Sections \ref{sec_bifurc} and \ref{sec_num}, we will explore how the basins of attraction strongly depend on the product $\alpha \nu$, assuming all the other parameters to be fixed.
\end{remark}

\subsection{Endemic equilibria}
\label{sec:endemic}
We now discuss existence of Endemic Equilibria (EE) of system  \eqref{red_sys}, i.e. the equilibria in which $I,Y>0$.  

\begin{theorem}\label{existence_EE}\mbox{}
We distinguish the following cases.
\begin{itemize}
    \item Assume $\RO >1$. Then the system \eqref{red_sys} has a unique positive {(i.e., endemic)} equilibrium.
\item Assume $\RO<1$. Then system \eqref{red_sys} admits, for $\delta \approx 0$, two positive equilibria if and only if the following conditions hold:
\begin{equation}
\label{cond_R0.lt.1}
\left\{\begin{aligned}
    \alpha \nu >&\; \dfrac{\gamma_2}{\gamma_1 },\\
    \RO >&\; 2 \dfrac{\gamma_2}{\gamma_1 \alpha \nu} \left(\dfrac{1}{2} - \dfrac{\gamma_2}{\gamma_1 \alpha \nu} + \sqrt{ \nu - \dfrac{\gamma_2}{\gamma_1 \alpha}\left(1- \dfrac{\gamma_2}{\gamma_1 \alpha}\right) } \right).
\end{aligned}
\right. 
\end{equation}
\end{itemize}
\end{theorem}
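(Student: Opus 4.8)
The plan is to reduce the search for endemic equilibria to counting the roots, inside a biologically admissible interval, of a single scalar quadratic, and to treat the two regimes $\RO>1$ and $\RO<1$ separately.

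\emph{Set-up.} At an equilibrium of \eqref{red_sys} with $I,Y>0$, the relations $I'=0$, $S'=0$ give $\beta S(I+\alpha Y)=\gamma_1 I=\delta\varepsilon P$, hence $P=\gamma_1 I/(\delta\varepsilon)$, $T=\gamma_1 I/\varepsilon$, $S=\gamma_1 I/\bigl(\beta(I+\alpha Y)\bigr)$, and (from the $R$-equation of \eqref{sys_simpler}) $R=\gamma_2 Y/(\delta\varepsilon)$; the equation $Y'=0$, i.e.\ $\nu\beta P(I+\alpha Y)=\gamma_2 Y$, is then one relation between $I$ and $Y$, linear in $Y$, giving $Y=\gamma_1\nu\beta I^2/\bigl(\gamma_2\delta\varepsilon-\gamma_1\nu\beta\alpha I\bigr)$, which is nonnegative precisely for $I\in(0,I_{\max})$, $I_{\max}:=\gamma_2\delta\varepsilon/(\gamma_1\nu\beta\alpha)$. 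Substituting back one gets the identity $S=\tfrac1{\RO}-\tfrac{\gamma_1^2\nu\alpha}{\gamma_2\delta\varepsilon}I$, so $S$ decreases from $1/\RO$ to $0$ as $I$ runs over $[0,I_{\max}]$. Inserting everything into $S+I+T+P+Y+R=1$ and clearing the positive factor $\gamma_2\delta\varepsilon-\gamma_1\nu\beta\alpha I$ yields a quadratic $Q(I)=q_2I^2+q_1I+q_0$ whose roots in $(0,I_{\max})$ are exactly the endemic equilibria. Two facts come for free: $q_0$ has the sign of $\tfrac1{\RO}-1$, and $Q(I_{\max})>0$ (there the first part of $Q$ vanishes and the rest is manifestly positive).

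\emph{The case $\RO>1$.} Here $q_0<0<Q(I_{\max})$, so $Q$ has exactly one root in $(0,I_{\max})$: a unique endemic equilibrium, and this holds for every admissible $\delta,\varepsilon$, not only for $\delta\approx0$.

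\emph{The case $\RO<1$, $\delta\approx0$.} Now $q_0>0$ and $Q(I_{\max})>0$, so the number of admissible roots is $0$ or $2$. Since $P\le1$, $R\le1$ force $I,Y=\mathcal{O}(\delta\varepsilon)$, I would rescale $I=\delta\varepsilon u$, $Y=\delta\varepsilon v$, let $\delta\to0$ (so $T=\mathcal{O}(\delta)\to0$), and, with $\sigma:=\tfrac1{\RO}-S$ and $\theta:=\gamma_2/(\gamma_1\alpha\nu)$, reduce the limiting equilibrium conditions to
\begin{equation*}
\bigl(1-\theta+\nu\theta^2\bigr)\sigma^2+\Bigl(1-\tfrac1{\RO}(2-\theta)\Bigr)\sigma+\tfrac1{\RO}\Bigl(\tfrac1{\RO}-1\Bigr)=0,\qquad \sigma\in\Bigl(0,\tfrac1{\RO}\Bigr).
\end{equation*}
The constant term is positive ($\RO<1$) and the value of the left-hand side at $\sigma=1/\RO$ is $\nu\theta^2/\RO^2>0$; hence two distinct admissible roots exist if and only if the leading coefficient is positive, the linear coefficient is negative (so the vertex is positive), the discriminant is positive, and the vertex lies below $1/\RO$. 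One checks that these four requirements can hold simultaneously only if $\theta<1$, i.e.\ $\alpha\nu>\gamma_2/\gamma_1$ — the first condition of \eqref{cond_R0.lt.1} — because for $\theta\ge1$ the $\RO$-ranges producing a negative linear coefficient and a positive discriminant are disjoint; granting $\theta<1$, the first two requirements hold automatically ($1-\theta>0$ and $2-\theta>1>\RO$), and positivity of the discriminant simplifies to the second inequality of \eqref{cond_R0.lt.1}, which in turn already forces the vertex below $1/\RO$. Conversely, if either inequality in \eqref{cond_R0.lt.1} fails, the same sign/vertex discussion leaves no admissible root. Finally, the two roots obtained at $\delta=0$ are simple, so by the implicit function theorem the number of endemic equilibria in $\Delta$ is unchanged for all sufficiently small $\delta>0$.

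\emph{Expected difficulty.} The heavy step is the $\RO<1$ bookkeeping: pushing the discriminant computation through to the closed form in \eqref{cond_R0.lt.1} and verifying that the ``vertex below $1/\RO$'' condition is subsumed by it (this is where a case split on the sign of $1-4\nu$ appears). The passage $\delta\to0$ — uniform a priori bounds on equilibria plus simplicity of the limiting roots — is routine but should be made explicit.
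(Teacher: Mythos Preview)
Your approach is essentially the same as the paper's: reduce the equilibrium conditions to a single quadratic in one variable, use the endpoint signs $f(0)$ and $f(\text{upper endpoint})$ to get uniqueness when $\RO>1$, and when $\RO<1$ analyze the four conditions (leading coefficient positive, vertex in the interval, discriminant positive) for two admissible roots, showing they collapse to \eqref{cond_R0.lt.1}. The only cosmetic differences are that the paper parameterizes by $P$ rather than $I$ (your $\sigma$ equals the paper's $P/Q$ with $Q=\theta$, so your quadratic is the paper's divided by $Q$), carries the $O(\delta)$ terms and drops them at the end rather than rescaling first, and establishes the necessity of $\theta<1$ via the direction of the transcritical bifurcation at $\RO=1$ rather than by your direct sign argument.
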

\noindent The proof of Theorem \ref{existence_EE} can be found in Appendix \ref{app:proof}.

{For $\delta > 0$ and $\delta \not\approx 0$}, the condition on $\RO$ can be gathered from the proof, and is more cumbersome.
\begin{remark}
Clearly, when there are endemic equilibria, the DFE cannot be globally attractive. A natural question is therefore if the conditions of Theorem \ref{glob_stab} are exactly those that exclude the existence of endemic equilibria. If we consider the simple case $\gamma_1=\gamma_2$ and $\nu = 1$, Theorem \ref{glob_stab} states that the DFE is globally stable for $\RO < \min\{1, 1/\alpha\}$, while Theorem \ref{existence_EE} states that there are endemic equilibria if $\alpha > 1$ and
$$ \RO \ge \frac{1}{\alpha} + \frac{2}{\alpha}\left(\sqrt{1-\frac{1}{\alpha}+\frac{1}{\alpha^2}} - \frac{1}{\alpha}\right).$$
 Since it is not difficult to see that the right hand side  is larger than $1/\alpha$ (if $\alpha > 1$), we see there are values of $\RO$ for which we cannot either prove or disprove the global stability of the DFE. 
\end{remark}

\section{Fast-time scale} \label{sec_fast}
In this section we begin the analysis of the multiple time scales involved in the system, starting from the fast one. From here onwards, we assume to always be in the case $\RO > 1$.

System \eqref{red_sys} is a slow-fast system written in the non-standard form of GSPT; that can be seen by writing it as
\begin{align*}
    z ' = H(z) + \varepsilon G(z; \varepsilon, \delta)
\end{align*}
where
\begin{gather*}
    z = \begin{pmatrix}
    S\\  I\\ T\\ P\\ Y
    \end{pmatrix}, 
    \quad 
    H(z) =  \begin{pmatrix}
     -\beta S(I+\alpha Y)  \\
     \beta S(I+\alpha Y) -\gamma_1 I  \\
     \gamma_1 I \\
     -\nu \beta P(I+\alpha Y)\\
    \nu \beta P(I+\alpha Y) -\gamma_2 Y
     \end{pmatrix},
     \quad 
     G(z; \varepsilon, \delta)=
     \begin{pmatrix}
     \delta P  \\
     0 \\
     - T\\
     \delta +  T(1-\delta) -\delta (S+I+2P+Y)\\
    0
     \end{pmatrix}.
\end{gather*}
see \cite{fenichel1979geometric, wechselberger2020geometric} for details.

We take $\varepsilon \rightarrow 0$ in system \eqref{red_sys}, obtaining the fast limit system
\begin{equation}
 \label{sys_fast}   
\begin{split}
    S' =&\; -\beta S(I+\alpha Y),\\
    I' =&\; \beta S(I+\alpha Y) -\gamma_1 I, \\
    T' =&\; \gamma_1 I, \\
    P' =&\; -\nu \beta P(I+\alpha Y),\\
    Y' =&\; \nu \beta P(I+\alpha Y) -\gamma_2 Y.
\end{split}
\end{equation}
The $1$-critical manifold is defined as the set of equilibria of \eqref{sys_fast}, i.e. it is given by the set
\begin{equation}\label{crit_manif}
    \mathcal{C}_{1} := \{ (S,I,T,P,Y)\in \mathbb{R}^5 \, | \, I=Y= 0 \}.
\end{equation}

We emphasise that, since we are analysing a system which evolves on three time scales, we adopt the notation used in \cite{cardin2017fenichel,kaklamanos2022bifurcations} and call the critical manifold in the fast-intermediate time scales $1$-critical, and the one in the intermediate-slow $2$-critical. There is no flow anymore from first three compartments ($S$, $I$, $T$) and last two ($P$, $Y$), although the two resulting subsystems are not decoupled, due to $I$ and $Y$ still playing a role in both.

In the following, for a given solution of \eqref{sys_fast}, we denote by
\begin{align}
X_\infty =\lim_{t \to +\infty} X(t), \quad X = S, I, T, P, Y, \label{Xinf}
\end{align}
the limit of the corresponding variable on $\mathcal{C}_1$ , where $X_0$ denotes the value at $t=0$.

\begin{proposition}
Trajectories of system \eqref{sys_fast} converge to $\mathcal{C}_1$ \eqref{crit_manif} as $t \to +\infty$. 
\end{proposition}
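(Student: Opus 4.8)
The plan is to show that along any trajectory of \eqref{sys_fast} the total ``infection content'' decays, forcing $I,Y\to 0$. The natural Lyapunov-type candidate is $W(t)=I(t)+\alpha Y(t)$, which is exactly the combination driving the force of infection. Computing its derivative along \eqref{sys_fast} gives
\begin{equation*}
W' = \beta S(I+\alpha Y) - \gamma_1 I + \alpha\nu\beta P(I+\alpha Y) - \alpha\gamma_2 Y
   = \beta(S+\alpha\nu P)(I+\alpha Y) - \gamma_1 I - \alpha\gamma_2 Y.
\end{equation*}
First I would note $S'=-\beta S(I+\alpha Y)\le 0$ and $P'=-\nu\beta P(I+\alpha Y)\le 0$, so $S$ and $P$ are nonincreasing and bounded below by $0$, hence $S(t)\downarrow S_\infty\ge 0$ and $P(t)\downarrow P_\infty\ge 0$; also $T'=\gamma_1 I\ge 0$ with $T\le 1$, so $T(t)\uparrow T_\infty$. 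Since $S,I,T,P,Y$ remain in the compact set $\Delta$ (forward invariance is already established, and \eqref{sys_fast} is the $\varepsilon\to 0$ limit, for which the same sign checks on the boundary hold), all limits $X_\infty$ in \eqref{Xinf} exist once we establish convergence of $I$ and $Y$.

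The key step is to rule out that $I+\alpha Y$ stays bounded away from $0$. Because $S'=-\beta S(I+\alpha Y)$ and $S$ converges, $\int_0^\infty S(t)(I(t)+\alpha Y(t))\,dt<\infty$; similarly $\int_0^\infty P(t)(I(t)+\alpha Y(t))\,dt<\infty$ from the $P$ equation. If $S_\infty>0$, this already forces $\int_0^\infty (I+\alpha Y)\,dt<\infty$, and since $(I+\alpha Y)'$ is bounded on $\Delta$, Barbalat's lemma gives $I+\alpha Y\to 0$, hence $I\to 0$ and $Y\to 0$ (both nonnegative). The remaining case is $S_\infty=0$: here from the $I$ equation $I'=\beta S(I+\alpha Y)-\gamma_1 I$, and since $\beta S(I+\alpha Y)$ is integrable while $I$ is bounded, one concludes $I\to 0$ by the same Barbalat argument applied to $I$ directly (its derivative is bounded; $\int_0^\infty \gamma_1 I\,dt = I_0 - I_\infty + \int_0^\infty \beta S(I+\alpha Y)\,dt<\infty$, so $\int_0^\infty I\,dt<\infty$). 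With $I\to 0$, the $Y$ equation becomes $Y'=\nu\beta P\,\alpha Y + o(1)-\gamma_2 Y$; since $P\le 1$ we get $Y' \le (\alpha\nu\beta P-\gamma_2)Y + o(1)$, but more simply $\int_0^\infty \gamma_2 Y\,dt = Y_0 - Y_\infty + \int_0^\infty \nu\beta P(I+\alpha Y)\,dt < \infty$ (the last integral is finite from the $P$ equation), so $\int_0^\infty Y\,dt<\infty$ and Barbalat gives $Y\to 0$. Either way $(I,Y)\to(0,0)$, i.e.\ the trajectory converges to $\mathcal{C}_1$.

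The main obstacle is handling the degenerate case $S_\infty=0$ (and likewise being careful when $P_\infty=0$), where the naive inequality $W'\le \beta(S+\alpha\nu P)W - \gamma_1 I - \alpha\gamma_2 Y$ does not immediately give exponential decay without sign control on $\alpha\nu$; the fix, as sketched above, is to avoid relying on $W$ alone and instead extract integrability of $I$ and of $Y$ separately directly from their own differential equations (each right-hand side is a difference of a term that integrates to something finite — because $S$, $P$ converge and the bilinear fluxes are integrable — and a linear decay term), and then apply Barbalat's lemma using boundedness of the derivatives on the compact invariant region $\Delta$. Alternatively, one could invoke LaSalle's invariance principle with $W=I+\alpha Y$: $W'\le 0$ is not automatic, so instead use $V = I + T + \alpha Y + \alpha\frac{\gamma_2}{\gamma_1}\cdot(\text{something})$ — but the cleanest route is the Barbalat/integrability argument, and I would present that.
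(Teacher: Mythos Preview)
Your proof is correct, and the core idea---showing $\int_0^\infty I\,dt<\infty$ and $\int_0^\infty Y\,dt<\infty$, then concluding $I,Y\to 0$---is the same as the paper's. The paper's execution is more economical, however: it simply observes that $(S+I)'=-\gamma_1 I\le 0$, so $S+I$ is monotone and bounded, hence convergent; since $S$ already converges, so does $I$, and integrating gives $\gamma_1\int_0^\infty I=(S_0+I_0)-(S_\infty+I_\infty)<\infty$, forcing $I_\infty=0$. The identical trick with $(P+Y)'=-\gamma_2 Y$ handles $Y$. This route avoids your case split on $S_\infty$ (note that your ``$S_\infty=0$'' argument in fact works unconditionally, so the split is unnecessary), avoids invoking Barbalat's lemma, and sidesteps the minor circularity of writing $I_0-I_\infty$ in the integral identity before $I_\infty$ is known to exist. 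Your approach does not buy anything additional here, but it is a valid alternative packaging of the same integrability argument.
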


\begin{proof}
We apply strategies similar to  \cite{andreasen2011final,brauer2008epidemic}.
Recall that trajectories of system, and hence of system \eqref{sys_fast}, evolve on the compact set $\Delta$, defined in \eqref{rel_region}. 
Since $S' < 0$, there exists $S_\infty \ge 0$; similarly from $S'+I'<0$, there exists $(S+I)_\infty$, hence $I_\infty \ge 0$.

Now, integrating $S'+I'$ from system \eqref{sys_fast}, we obtain
$$
-\infty< S_\infty +I_\infty -S_0-I_0=\int_0^{+\infty}(S'(s)+I'(s))\text{d}s=-\gamma_1\int_0^{+\infty} I(s)\text{d}s <0,
$$
hence $I_\infty=0$. Similarly, by integrating $P'+Y'<0$, we can conclude that $Y_\infty=0$.

Hence, we have
$$
S_\infty= S_0 \exp \left( - \beta \int_0^\infty (I(t)+\alpha Y(t))\text{d}t \right)>0,$$
similarly, $P\rightarrow P_\infty>0$ and $T\rightarrow T_0 +S_0-S_\infty$. 
\end{proof}
The values of $S_\infty$ and $P_\infty$ can actually be computed, as in \cite{andreasen2011final}. We do so in the following Lemma.
\begin{lemma}
The limit value under the fast flow \eqref{sys_fast} $S_\infty$ is the unique solution in $(0,S_0)$ of 
\begin{equation}
    \label{conserv2}
\log \left( \frac{S_\infty}{S_0}\right) - \beta  \left(\dfrac{S_\infty -S_0 }{\gamma_1} + \alpha \dfrac{P_0 \left( \frac{S_\infty}{S_0}\right)^\nu -P_0 }{\gamma_2}\right) = - \beta  \left(\dfrac{ I_0}{\gamma_1} + \alpha \dfrac{ Y_0}{\gamma_2}\right),
 \end{equation}
whereas $P_\infty$ is obtained as
\begin{equation}
    \label{conserv1}
    P_\infty = P_0 \left( \frac{S_\infty}{S_0}\right)^\nu.
 \end{equation}
 \end{lemma}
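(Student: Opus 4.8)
The plan is to exploit the conserved quantities of the fast system \eqref{sys_fast}, exactly as in the final-size arguments of \cite{andreasen2011final}. First I would note that the $S$ and $P$ equations only involve the combination $F(t) := \beta(I(t)+\alpha Y(t))$, so that
\begin{equation*}
\frac{S'}{S} = -F, \qquad \frac{P'}{P} = -\nu F,
\end{equation*}
and dividing one by the other (or integrating both against $\mathrm{d}t$) gives $\frac{\mathrm{d}}{\mathrm{d}t}\big(\log P - \nu \log S\big) = 0$. Hence $P(t)/S(t)^\nu$ is constant along fast orbits, and evaluating at $t=0$ and at $t=+\infty$ yields \eqref{conserv1} once we know $S_\infty \in (0,S_0)$ exists (which is the previous proposition).

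Next I would produce the implicit equation \eqref{conserv2}. Write $\Phi(t) = \int_0^t (I(s)+\alpha Y(s))\,\mathrm{d}s$, so $\log(S_\infty/S_0) = -\beta \Phi(\infty)$, with $\Phi(\infty)$ finite by the previous proposition. The point is to express $\beta\Phi(\infty)$ in terms of the endpoint data only. From $T' = \gamma_1 I$ and $S' = -\beta S(I+\alpha Y)$ one gets $\int_0^\infty I\,\mathrm{d}t$ by combining with the $R$-equation of \eqref{sys_simpler}, $R' = \gamma_2 Y$: indeed $S'+I'+T' = 0$ along the fast flow gives $\int_0^\infty \gamma_1 I\,\mathrm{d}t = T_\infty - T_0 = S_0 - S_\infty + I_0$ (using $I_\infty = 0$), so $\int_0^\infty I\,\mathrm{d}t = (S_0 - S_\infty + I_0)/\gamma_1$. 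Similarly $P' + Y' + R' = 0$ gives $\int_0^\infty \gamma_2 Y\,\mathrm{d}t = R_\infty - R_0 = P_0 - P_\infty + Y_0$, hence $\int_0^\infty \alpha Y\,\mathrm{d}t = \alpha(P_0 - P_\infty + Y_0)/\gamma_2$. Substituting $P_\infty = P_0 (S_\infty/S_0)^\nu$ from \eqref{conserv1} and adding the two integrals gives
\begin{equation*}
\beta\Phi(\infty) = \beta\left(\frac{S_0 - S_\infty + I_0}{\gamma_1} + \alpha\,\frac{P_0 - P_0(S_\infty/S_0)^\nu + Y_0}{\gamma_2}\right),
\end{equation*}
and equating with $-\log(S_\infty/S_0)$ and rearranging is precisely \eqref{conserv2}.

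It remains to argue uniqueness of the solution in $(0,S_0)$. Here I would set $g(s) = \log(s/S_0) - \beta\left(\frac{s - S_0}{\gamma_1} + \alpha \frac{P_0 (s/S_0)^\nu - P_0}{\gamma_2}\right)$ on $(0, S_0]$ and study its monotonicity and boundary behaviour: $g(S_0) = 0$, while the sign of the right-hand side of \eqref{conserv2} is negative (it equals $-\beta(I_0/\gamma_1 + \alpha Y_0/\gamma_2) < 0$ whenever $I_0 + Y_0 > 0$), and as $s \to 0^+$ the $\log$ term drives $g \to -\infty$. Differentiating, $g'(s) = \frac{1}{s} - \frac{\beta}{\gamma_1} - \frac{\alpha\beta\nu P_0}{\gamma_2 S_0}\left(\frac{s}{S_0}\right)^{\nu-1}$, which is strictly decreasing in $s$ (since $\nu > 0$ and the last term is nondecreasing, the first term decreasing), so $g$ is strictly concave; combined with $g(S_0)=0$ and $g(0^+)=-\infty$ this gives exactly one interior zero precisely when the prescribed value $-\beta(I_0/\gamma_1+\alpha Y_0/\gamma_2)$ is negative, i.e. whenever there is any initial infection. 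The main obstacle is bookkeeping: making sure the time-integrals are finite (granted by the preceding proposition), correctly reintroducing the $R$-equation from \eqref{sys_simpler} to evaluate $\int I$ and $\int Y$, and handling the monotonicity of $g$ cleanly when $0 < \nu < 1$ versus $\nu \ge 1$ — though in both cases $g'$ is strictly decreasing, so concavity holds uniformly and no case split is actually needed.
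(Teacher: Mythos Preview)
Your derivations of \eqref{conserv1} and \eqref{conserv2} are correct and follow essentially the same route as the paper; you compute $\int_0^\infty I\,\mathrm{d}t$ and $\int_0^\infty Y\,\mathrm{d}t$ separately via the conservation laws $S+I+T=\mathrm{const}$ and $P+Y+R=\mathrm{const}$, while the paper integrates the single identity $\tfrac{\mathrm{d}}{\mathrm{d}t}\log S = \beta\tfrac{\mathrm{d}}{\mathrm{d}t}\bigl(\tfrac{S+I}{\gamma_1}+\alpha\tfrac{P+Y}{\gamma_2}\bigr)$ directly, but the two are equivalent.

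There is, however, a genuine error in your uniqueness argument. You claim that
\[
g'(s)=\frac{1}{s}-\frac{\beta}{\gamma_1}-\frac{\alpha\beta\nu P_0}{\gamma_2 S_0^{\nu}}\,s^{\nu-1}
\]
is strictly decreasing, hence $g$ strictly concave, ``in both cases'' $0<\nu<1$ and $\nu\ge 1$. This is false for $0<\nu<1$: the term $s^{\nu-1}$ is then \emph{decreasing}, so the subtracted term is decreasing rather than nondecreasing. Concretely, $g''(s)=s^{-2}\bigl(-1+C(1-\nu)s^{\nu}\bigr)$ with $C=\alpha\beta\nu P_0/(\gamma_2 S_0^{\nu})>0$, which changes sign at $s^{\nu}=1/\bigl(C(1-\nu)\bigr)$; so $g$ is not concave in general and your justification breaks down.

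The fix, which is what the paper does, works uniformly for all $\nu>0$: factor
\[
g'(s)=\frac{1}{s}\,l(s),\qquad l(s)=1-\frac{\beta}{\gamma_1}s-\frac{\alpha\beta\nu P_0}{\gamma_2}\Bigl(\frac{s}{S_0}\Bigr)^{\nu}.
\]
Since both $s$ and $(s/S_0)^{\nu}$ are strictly increasing for any $\nu>0$, $l$ is strictly decreasing and has a unique zero; hence $g'$ has a unique zero and $g$ a unique critical point, necessarily a maximum (as $g(0^+)=-\infty$). Together with $g(S_0)=0$ this yields exactly one solution of $g(s)=-\beta(I_0/\gamma_1+\alpha Y_0/\gamma_2)<0$ in $(0,S_0)$.
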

\begin{proof}
 Indeed
\begin{equation*}
   \nu \frac{\text{d}}{\text{d}t} \log(S(t)) = \frac{\text{d}}{\text{d}t} \log(P(t)) \implies \left( \frac{S(t)}{S_0}\right)^\nu = \frac{P(t)}{P_0}.
\end{equation*}
Hence, taking the limit as $t \to +\infty$, we obtain \eqref{conserv1}.

Furthermore
\begin{equation*}
   \beta \frac{\text{d}}{\text{d}t} \left(\dfrac{S(t)+I(t)}{\gamma_1} + \alpha \dfrac{P(t)+Y(t)}{\gamma_2}\right)= - \beta(I(t) + \alpha Y(t)) = \frac{\text{d}}{\text{d}t} \log(S(t)) ,
\end{equation*}
which implies
\[
\log \left( \frac{S(t)}{S_0}\right) = \beta \left(\dfrac{S(t)+I(t)-S_0 - I_0}{\gamma_1} + \alpha \dfrac{P(t)+Y(t)-P_0 - Y_0}{\gamma_2}\right).
\]
Taking the limit as $t\rightarrow+\infty$, recalling that the solutions converge to the manifold $\mathcal{C}_1$, and using \eqref{conserv1} we get \eqref{conserv2}.

To show the uniqueness (known from the general result by Andreasen \cite{andreasen2011final}), 
we introduce
\begin{align*}
L(x) = \log \left( \frac{x}{S_0}\right) - \beta  \left(\dfrac{x -S_0 }{\gamma_1} + \alpha \dfrac{P_0 \left( \frac{x}{S_0}\right)^\nu -P_0 }{\gamma_2}\right).
\end{align*}
Then,
\begin{equation}
    \label{Hder}
L'(x) = \frac1x - \frac{\beta}{\gamma_1} - \beta  \alpha \dfrac{P_0 \nu x^{\nu -1}}{\gamma_2 S_0^\nu} =: \frac{1}{x} l(x),
 \end{equation}
where
\begin{equation}
    \label{hx}
    l(x) = 1 - \frac{\beta}{\gamma_1} x - \frac{\beta}{\gamma_2}  \alpha \nu P_0 \left(\dfrac{x }{S_0}\right)^\nu.
\end{equation}
It is clear that $l$ is a decreasing function, hence it has a unique 0. From \eqref{Hder}, we see that $L$ has a unique extremum in $x>0$ that has to be a maximum. From $L(0_+) = - \infty$ and $L(S_0) = 0 $ we conclude that \eqref{conserv2} has a unique solution in $(0,S_0)$.
\end{proof}

Notice that 
$$L'(S_0) = \dfrac{1}{S_0} \left( 1 - \frac{\beta}{\gamma_1} S_0 - \frac{\beta}{\gamma_2}  \alpha \nu P_0 \right),$$
hence 
\begin{equation}
    \label{R0_fast}
    L'(S_0) \lessgtr 0 \iff \RO^{f} := \frac{\beta}{\gamma_1} S_0 + \frac{\beta}{\gamma_2}  \alpha \nu P_0 \gtrless 1.
\end{equation}
Therefore, the equation $L(S_\infty) = 0$ has a unique solution in $(0,S_0)$ if $\RO^f > 1$, while it has no solutions in $(0,S_0)$ if $\RO^f \le 1$.

This means that, if we consider the limiting case of \eqref{sys_fast} with $I_0,\ Y_0 \approx 0$ (which is the typical case both when a new pathogen is introduced in a population and, as we will see, when the system re-enters the fast time scale), we have $S_\infty \ll S_0$ (thus an epidemic) only if $\RO^f > 1$.

We can therefore limit ourselves to consider the case in which $\RO^f > 1$. After the fast flow, solutions ``land'' on the attracting part of the critical manifold $\mathcal{C}_1$, meaning
 \begin{equation}\label{R0veloce}
     \frac{\beta}{\gamma_1} S_\infty + \frac{\beta}{\gamma_2}  \alpha \nu P_\infty  < 1 ,
\end{equation}
as we show in the following section.

\subsection{Eigenvalues on the 1-critical manifold}

We can re-write the $I,Y$ equations representing infected individuals of the fast system \eqref{sys_fast} in vector form:
$$
\begin{pmatrix}
I\\
Y
\end{pmatrix}'=
\begin{pmatrix}
\beta  S-\gamma_1 & \alpha\beta S\\
\nu \beta P & \alpha \nu \beta P-\gamma_2 
\end{pmatrix}
\begin{pmatrix}
I\\
Y
\end{pmatrix}=:A
\begin{pmatrix}
I\\
Y
\end{pmatrix}.
$$
We are only interested in these variables, since the eigenvalues associated to the remaining three variables will all be $0$ on the $1$-critical manifold, since we have already taken the limit $\varepsilon\rightarrow 0$ \cite{wechselberger2020geometric}. 
To compute the eigenvalues of the matrix $A$, we observe that the characteristic equation of $A$ is as follows:
\begin{equation}\label{eq:lambdaquadr}
    \lambda^2 + \lambda(\gamma_1 + \gamma_2 - \alpha\nu\beta P - \beta S) + \gamma_1\gamma_2 - \alpha\nu\beta\gamma_1 P - \beta \gamma_2 P =0.
\end{equation}
To make the analysis  less cumbersome, we will assume  in what follows $\gamma_1= \gamma_2 =: \gamma$. Under this assumption, equation \eqref{eq:lambdaquadr} reduces to
\begin{equation*}
    \lambda^2 + \lambda(2 \gamma - \alpha\nu\beta P - \beta S) + \gamma^2 - \alpha\nu\beta\gamma P - \beta \gamma P =0,
\end{equation*}
and thus
\begin{equation*}
    \lambda_{1,2} = \dfrac{\beta(\alpha\nu P + S) - 2\gamma \pm  \beta(\alpha\nu P + S)}{2},
\end{equation*}
from which we obtain the two eigenvalues
\begin{equation*}
    \lambda_1 = -\gamma \;\; \text{and} \;\; \lambda_2 = -\gamma +  \beta(\alpha\nu P + S).
\end{equation*}

Notice that $\lambda_1 < \lambda_2$ for $S,P> 0$. This separation in eigenvalues is one of the crucial assumptions to apply the entry-exit function, as we will see in Section \ref{sec:entrex}.

\section{Intermediate time scale}\label{sec_inter}

In this section, we analyse the evolution of system \eqref{red_sys} on the intermediate time scale $\tau_1=\varepsilon t$. 

Consider \eqref{red_sys}, and assume that a solution reached an $\mathcal{O}(\varepsilon)$ neighbourhood of the $1$-critical manifold $\mathcal{C}_1$ \eqref{crit_manif}. We rescale the infectious compartment by $I=\varepsilon m$, $Y=\varepsilon n$, and obtain
\begin{align*}
    S' =&\; -\varepsilon\beta S(m+\alpha n)+\delta \varepsilon P, \nonumber \\
     m' =&\; \beta S(m+\alpha n) -\gamma m,  \nonumber\\
    T' =&\; \varepsilon\gamma m -\varepsilon T,\\
    P' =&\;\varepsilon\delta+ \varepsilon T(1-\delta) -\varepsilon\nu \beta P(m+\alpha n)-\delta \varepsilon (S+\varepsilon m+2P+\varepsilon n), \nonumber\\
 n' =&\; \nu \beta P(m+\alpha n) -\gamma n. \nonumber  
\end{align*}
We then apply a rescaling to the time coordinate, bringing the system to the intermediate time scale $\tau_1=\varepsilon t$:
\begin{align*}
    S\intr =&\; -\beta S(m+\alpha n)+\delta P, \nonumber \\
     \varepsilon m\intr =&\; \beta S(m+\alpha n) -\gamma m,  \nonumber\\
    T\intr =&\; \gamma m - T,\\
    P\intr =&\; \delta+  T(1-\delta) -\nu \beta P(m+\alpha n)-\delta  (S+\varepsilon m+2P+\varepsilon n), \nonumber\\
 \varepsilon n\intr =&\; \nu \beta P(m+\alpha n) -\gamma n, \nonumber
\end{align*}
letting $\intr$ denotes the derivative with respect to the intermediate time scale $\tau_1$. 

If we look at these equations on the $1$-critical manifold, now determined by $m=n=0$, we obtain the linear subsystem
\begin{align*}
    S\intr =&\; \delta P, \nonumber \\
     T\intr =&\;  - T,\\
    P\intr =&\;\delta+ T (1-\delta) -\delta(S+2  P), \nonumber
\end{align*}
with $S$ now being $\delta$-slow, $P$, $T$ fast, and eigenvalues $0$, $-1$, $-\delta$.

We now take $\delta \rightarrow 0$:
\begin{align*}
    T\intr =&\; -T, \\
    P\intr =&\; T.
\end{align*}
The $2$-critical manifold is then
\begin{equation}\label{eqn:subcrit}
    \mathcal{C}_2:=\{(S,I,T,P,Y)\in \mathcal{C}_{1} \, | \, T=0\}.
\end{equation} 
In the intermediate time scale, we have $T\rightarrow 0$, $P\rightarrow P_\infty+T_0 +S_0-S_\infty= P_\infty + T_\infty$. Nothing else happens, meaning $I$ and $Y$ (or, equivalently, $m$ and $n$) remain 0, whereas $S$ remains at its values from the fast time scale. Instead, $T$ and $P$ evolve according to the following formulas:
\begin{equation}\label{interm_TP}
\begin{aligned}
    T(\tau_1) =&\; T_\infty e^{-\tau_1}, \\
    P(\tau_1) =&\; P_\infty +T_\infty(1-e^{-\tau_1}).
\end{aligned}
\end{equation}
In the following section, we will investigate the delayed loss of stability of the $1$-critical manifold, applying the so-called \emph{entry-exit function}.

\subsection{Entry-exit function}\label{sec:entrex}

The entry-exit function is an important tool in the field of multiple time scale systems in which the critical manifold is not uniformly hyperbolic, and thus standard GSPT theory can not be applied. 

More specifically, in its lowest dimensional formulation, this construction applies to planar systems of the form 
\begin{equation}\label{eq:entex}
\begin{aligned}
x'&= f(x,y,\varepsilon)x,\\
y'&=\varepsilon g(x,y,\varepsilon),
\end{aligned}
\end{equation}
with $(x,y)\in \mathbb{R}^2$, $g(0,y,0)>0$ and $\textnormal{sign}(f(0,y,0))=\textnormal{sign}(y)$. Note that for $\varepsilon=0$, the $y$-axis consists of normally attracting/repelling equilibria if $y$ is negative/positive, respectively.
\begin{figure}[htbp]\centering
	\begin{tikzpicture}
		\node at (0,0){\includegraphics[scale=0.85]{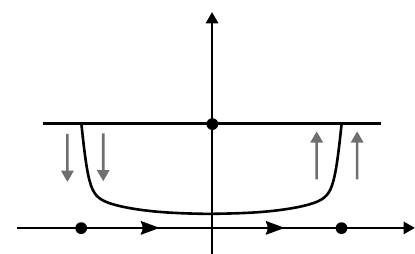}};
		\node at (2,-.9){$y$};
		\node at (0.05,1.15){$x$};
		\node at (-1.1,.2){$x=x_0$};
		\node at (-1,-1.15) {$y_0$};
		\node at ( 1.1,-1.15) {$p_\varepsilon(y_0)$};
	\end{tikzpicture}
	\caption{Visualization of the entry-exit map on the line $x=x_0$.} 
	\label{fig:entrex2D}
\end{figure}

Consider a horizontal line $\{x=x_0\}$, close enough to the $y$-axis to obey the attraction/repulsion assumed above. An orbit of \eqref{eq:entex} that intersects such a line at $y=y_0<0$ (entry) re-intersects it again (exit) at $y=p_\varepsilon(y_0)$, as sketched in Figure~\ref{fig:entrex2D}.

As $\varepsilon \rightarrow 0$, the image of the return map $p_\varepsilon(y_0)$ to the horizontal line $x=x_0$ approaches $p_0(y_0)$ given implicitly by
\begin{equation}\label{eq:pzero}
\int_{y_0}^{p_0(y_0)} \frac{f(0,y,0)}{g(0,y,0)}\textnormal{d}y = 0.
\end{equation}
This construction can be generalized to higher dimensional systems, such as the one we are studying in this paper. 
For a more precise description of the planar case, we refer to \cite{de2008smoothness,de2016entry} or the preliminaries of \cite{jardon2021geometric1}. For more general theorems, we refer the interested reader to \cite{liu2000exchange,neishtadt1987persistence,neishtadt1988persistence,schecter2008exchange}.

Note than in many cases it is only possible to compute the exit \emph{time} $\tau_E$, rather than an exit \emph{point} \cite{jardon2021geometric1,jardon2021geometric2}. Without delving too far in the precise details, assume that the system is still \eqref{eq:entex}, but with $(x,y) \in \R^{m+n}$ and $f(x,y,\varepsilon)$ an $m \times m$ matrix. Assume that only one eigenvalue (let it $\lambda_1(s)$) of $f(0,y(s;y_0),0)$  changes its sign as $s$ increases from 0 to $\infty$; assume, moreover, that this eigenvalue is separated from all the other eigenvalues for all values of $s \in \mathbb{R}$. The exit \emph{time} $\tau_E$ in the slow time scale can be, under some additional conditions, obtained through
$$ \int_0^{\tau_E} \lambda_1(s) \, ds = 0.$$
Here $y(s;y_0)$ is the solution of 
$$ \dot y = g(0,y,0),\;\; y(0) = y_0,$$
and $\lambda_1(s)$ is the principal eigenvalue of $f(0,y,0)$ computed at $y= y(s;y_0)$.

We remark that a recent result was achieved in weakening the assumption of the eigenvalue separation, providing a generalization of the known entry-exit formulae \cite{kaklamanos2022entry}.

In this section, we give conditions on $S_\infty$, $P_\infty$ and $T_\infty$ (recall eq.\ \eqref{Xinf}), to determine whether an orbit approaches the $2$-critical manifold $\{ I=Y=T=0 \}$, entering the slow time scale, or the system returns to the fast scale from the intermediate one.

\begin{proposition}\label{int_or_slow}
If
\begin{equation}
\label{exit_inter}\beta \left(S_\infty + \alpha \nu (P_\infty+ T_\infty)\right)>\gamma,
\end{equation}
then, for $\varepsilon$ and $\delta$ sufficiently small, the entry-exit phenomenon happens on the intermediate scale, i.e. the orbit will not reach a $\mathcal{O}(\delta)$ neighbourhood of the $2$-critical manifold $\{ I=Y=T=0 \}$.
If, on the other hand, 
\begin{align}
\beta \left(S_\infty + \alpha \nu (P_\infty+ T_\infty)\right)\le \gamma,
\end{align}
then the corresponding orbit enters a $\mathcal{O}(\delta)$ neighbourhood of the $2$-critical manifold $\{ I=Y=T=0 \}$, and the system enters the slow flow.
\end{proposition}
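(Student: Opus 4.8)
The approach is to apply the entry-exit formula in its ``exit time'' formulation, as described immediately before the Proposition, to the $1$-critical manifold $\mathcal{C}_1 = \{I=Y=0\}$ on the intermediate time scale $\tau_1$. First I would recall from Section~\ref{sec_inter} that, after the fast flow has brought the orbit to an $\mathcal{O}(\varepsilon)$ neighbourhood of $\mathcal{C}_1$, the slow variables evolve on the intermediate scale according to \eqref{interm_TP}, namely $S(\tau_1) \equiv S_\infty$, $T(\tau_1) = T_\infty e^{-\tau_1}$ and $P(\tau_1) = P_\infty + T_\infty(1-e^{-\tau_1})$ (here I use that $\delta \to 0$ decouples $S$). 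On $\mathcal{C}_1$, the linearization in the fast infectious directions $(I,Y) = (\varepsilon m, \varepsilon n)$ is governed by the matrix $A$ from Section~\ref{sec_fast}, whose eigenvalues, under the standing assumption $\gamma_1 = \gamma_2 = \gamma$, are $\lambda_1 = -\gamma$ and $\lambda_2(\tau_1) = -\gamma + \beta\big(S(\tau_1) + \alpha\nu P(\tau_1)\big)$. Crucially $\lambda_1 < \lambda_2$ is always strict (for $S,P>0$), so the principal eigenvalue $\lambda_2$ is separated from $\lambda_1$ uniformly along the orbit, which is exactly the hypothesis needed to invoke the exit-time formula.

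Next I would substitute the explicit intermediate-time profiles into $\lambda_2$. Writing $\lambda_2(\tau_1) = -\gamma + \beta\big(S_\infty + \alpha\nu(P_\infty + T_\infty(1-e^{-\tau_1}))\big)$, one sees that $\lambda_2$ is monotone increasing in $\tau_1$ (since $T_\infty \ge 0$ and $1-e^{-\tau_1}$ is increasing), starting at $\lambda_2(0) = -\gamma + \beta(S_\infty + \alpha\nu P_\infty) < 0$ — this last inequality is precisely \eqref{R0veloce}, the statement that the orbit landed on the attracting part of $\mathcal{C}_1$ — and tending, as $\tau_1 \to \infty$, to $\lambda_2(\infty) = -\gamma + \beta\big(S_\infty + \alpha\nu(P_\infty + T_\infty)\big)$. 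The dichotomy in the Proposition is then the dichotomy on the sign of $\lambda_2(\infty)$: if $\lambda_2(\infty) > 0$, i.e. \eqref{exit_inter} holds, then $\lambda_2$ changes sign at some finite $\tau_1^* > 0$, and the entry-exit integral $\int_0^{\tau_E} \lambda_2(s)\,ds = 0$ has a finite positive root $\tau_E$ (it exists because the integrand is negative then positive with positive limit, so the integral decreases then increases to $+\infty$); hence the orbit leaves the neighbourhood of $\mathcal{C}_1$ and re-enters the fast regime before ever reaching an $\mathcal{O}(\delta)$ neighbourhood of $\mathcal{C}_2 = \{I=Y=T=0\}$. If instead $\lambda_2(\infty) \le \gamma$ — sorry, $\lambda_2(\infty) \le 0$, i.e. $\beta(S_\infty + \alpha\nu(P_\infty+T_\infty)) \le \gamma$ — then $\lambda_2(\tau_1) < 0$ for \emph{all} $\tau_1$ (strictly, except possibly in the limit), the entry-exit integral never returns to $0$, the infectious variables stay exponentially small, and $T(\tau_1) \to 0$ drives the orbit into an $\mathcal{O}(\delta)$ neighbourhood of $\mathcal{C}_2$, after which the slow ($\tau_2$) dynamics take over.

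I would then need to argue that the contraction in the $\lambda_1 = -\gamma$ direction (which never weakens) does not interfere: it only reinforces the attraction, so the one-dimensional ``slow passage'' picture along the $\lambda_2$-direction governs the exit, exactly as in the cited $(m+n)$-dimensional entry-exit statement. Some care is needed at the boundary case of equality in \eqref{exit_inter}: there $\lambda_2(\infty) = 0$ is approached but never exceeded, so the integral $\int_0^{\tau_1}\lambda_2$ stays negative for all finite $\tau_1$, and the orbit does reach the slow manifold — consistent with the ``$\le$'' in the second alternative. I expect the main obstacle to be not the computation but the rigorous bookkeeping of the two small parameters: the entry-exit machinery on the intermediate scale must be justified uniformly while $\delta$ is also tending to $0$ (so that the $S$-equation is genuinely slower and the profiles \eqref{interm_TP} are valid), and one must track that the $\mathcal{O}(\varepsilon)$ entry data and the $\mathcal{O}(\delta)$ target neighbourhood are compatible — i.e. choosing $\varepsilon \ll \delta \ll 1$ (or the appropriate ordering) so that ``the orbit does not reach an $\mathcal{O}(\delta)$ neighbourhood of $\mathcal{C}_2$'' is a meaningful and provable statement. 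This is where I would lean on the higher-dimensional entry-exit results cited (\cite{de2016entry,schecter2008exchange,kaklamanos2022entry}) rather than reproving them.
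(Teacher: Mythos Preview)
Your proposal is correct and follows essentially the same route as the paper: substitute the intermediate-scale profiles \eqref{interm_TP} into the principal eigenvalue $\lambda_2(\tau_1)=-\gamma+\beta(S_\infty+\alpha\nu(P_\infty+T_\infty(1-e^{-\tau_1})))$, observe it is increasing from a negative value (by \eqref{R0veloce}) towards the limit $-\gamma+\beta(S_\infty+\alpha\nu(P_\infty+T_\infty))$, and conclude that the entry-exit integral $\int_0^{\tau_E}\lambda_2\,ds=0$ has a finite root precisely when that limit is positive. The only cosmetic difference is that the paper divides the integral by $x$ to obtain a monotone auxiliary function $\varphi$, whereas you argue directly on the integral; and you are more explicit than the paper about the eigenvalue separation hypothesis and the two-parameter bookkeeping, which the paper handles by citation.
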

\begin{proof}
Recall that the eigenvalue which gives the change of stability of the $1$-critical manifold $\{ I=Y=0 \}$ is $\lambda_1 = -\gamma +  \beta(\alpha\nu P + S)$. We therefore need to check if $\int_0^x \lambda_1(s) \text{d}s=0$ has solutions $x > 0$. Using the explicit expressions with $S$ constant and $P$ evolving according to \eqref{interm_TP},
this implies studying the existence of a value $x >0$ such that 
$$ \int_0^x (-\gamma +\beta S_\infty +\beta \alpha \nu P_\infty+\beta \alpha \nu T_\infty ( 1-  e^{-s})) \text{d}s = 0.$$
It is convenient to divide this expression by $x$ and, computing explicitly the integral, define
\begin{equation}
    \f(x)= \begin{cases}
        -\gamma +\beta S_\infty +\beta \alpha \nu P_\infty+\beta \alpha \nu T_\infty - \beta \alpha \nu T_\infty \dfrac{(1-e^{-x})}{x} &\mbox{if } x > 0 \\
        -\gamma +\beta S_\infty +\beta \alpha \nu P_\infty &\mbox{if } x = 0.
    \end{cases}
\label{equat_exit_ind}
\end{equation}
We are thus looking for a positive root of $\f(x) = 0$. It is immediate to see that $\f$ is a continuous increasing function. Furthermore from Section \ref{sec_fast} we know that $\RO(S_\infty + \alpha \nu P_\infty) < 1 $, i.e.\ $\f(0) < 0$. Finally, we have
$$
\lim_{x\rightarrow +\infty} \f(x)=
-\gamma +\beta S_\infty +\beta \alpha \nu P_\infty+\beta \alpha \nu T_\infty.
$$
Hence, if \eqref{exit_inter} holds,
there exists a unique positive root for $\f$. This implies that, for $\varepsilon$ and $\delta$ small enough, the entry-exit phenomenon happens already on the intermediate scale, and the orbit will not reach a $\mathcal{O}(\delta)$ neighbourhood of the $2$-critical manifold $\{ I=Y=T=0 \}$.

If instead, $\beta (S_\infty + \alpha \nu (P_\infty+ T_\infty)\le \gamma$, the equation $\f(x)=0$ has no positive solutions; this implies that the corresponding orbit eventually approaches a $\mathcal{O}(\delta)$ neighbourhood of the $2$-critical manifold $\{ I=Y=T=0 \}$, and the system enters the slow flow.
\end{proof}
The two cases {presented in Proposition \ref{int_or_slow}} are illustrated in Figures \ref{cond_less} and \ref{cond_greater}, respectively.

\begin{figure}[H]
     \centering
     \begin{subfigure}[b]{0.49\textwidth}
         \centering
         \includegraphics[width=\textwidth]{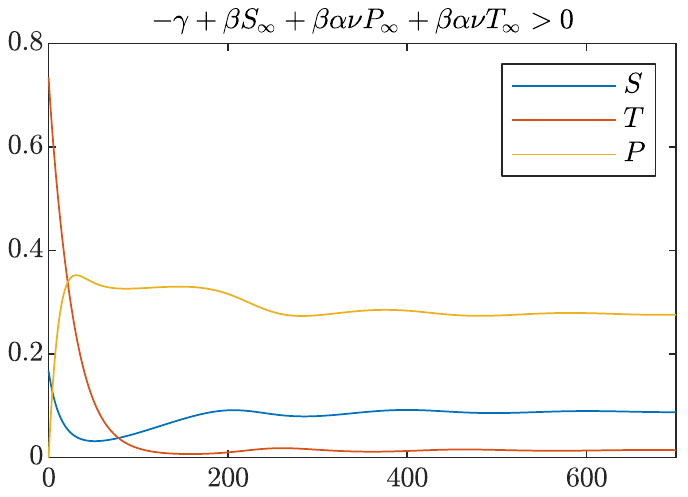}
         \caption{}
     \end{subfigure}
     \hfill
     \begin{subfigure}[b]{0.49\textwidth}
         \centering
         \includegraphics[width=\textwidth]{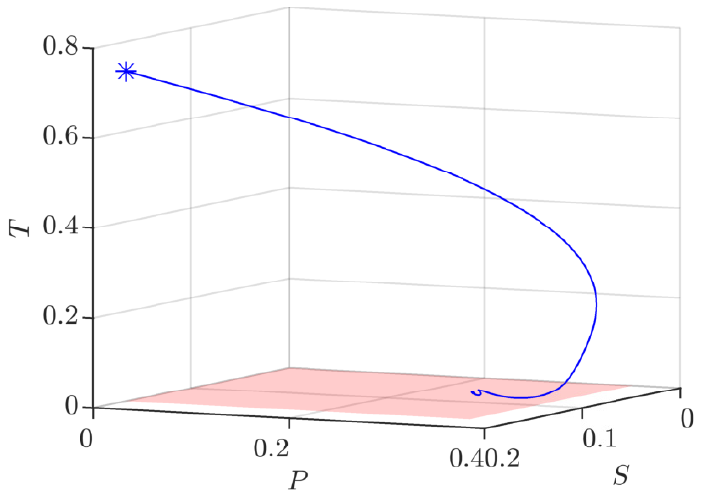}
         \caption{}
     \end{subfigure}
     \caption{A simulation of \eqref{red_sys} starting from values $(S,T,P) = (0.1667,0,0)$; panel a) shows $S$, $P$ and $T$ vs.\ time; panel b) the solution in the 3-d phase space. We name the initial values $S_\infty$, $P_\infty$ and $T_\infty$, since they satisfy $\beta S_\infty +\beta \alpha \nu P_\infty > \gamma$ and could be values reached at the end of the fast time scale with $I = Y \approx 0$. The values of the parameters are $\beta = 0.9$, $\alpha = 0.5$, $\nu = 0.7$, $\gamma_1 = \gamma_2 = 1/6$, $\delta = 1/20$, $\varepsilon = 1/20$.}
     \label{cond_less}
\end{figure}

\begin{figure}[H]
     \centering
     \begin{subfigure}[b]{0.49\textwidth}
         \centering
\includegraphics[width=\textwidth]{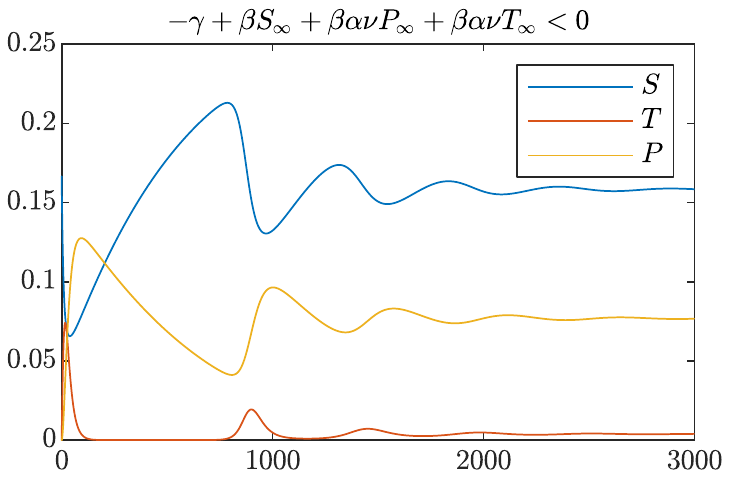}
         \caption{}
     \end{subfigure}
     \hfill
     \begin{subfigure}[b]{0.49\textwidth}
         \centering
\includegraphics[width=\textwidth]{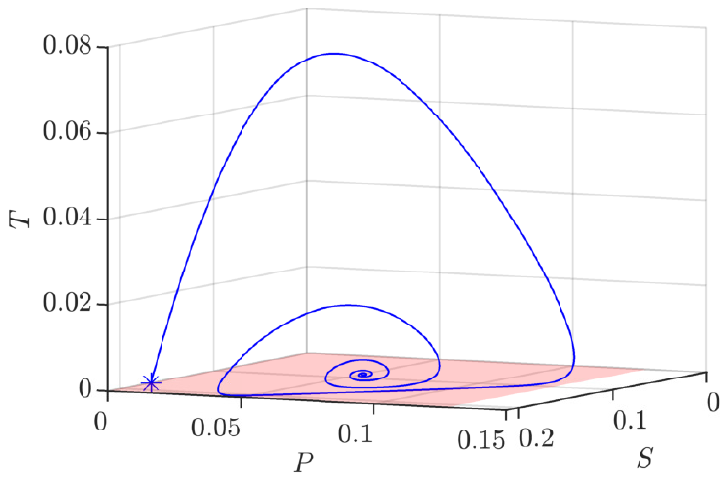}
         \caption{}
     \end{subfigure}
     \caption{Same parameters as in Figure \ref{cond_less}, except that here the initial values are $(S,T,P) = (0.1667, 0.7333, 0)$.}
     \label{cond_greater}
\end{figure}

\section{Slow time scale}\label{sec_slow}

Assume now that the intermediate scale does not lead to an exit from the $1$-critical manifold. Then, $T\rightarrow 0$, and we apply a rescaling once the dynamics arrive $\mathcal{O}(\delta\varepsilon)$-close to the $2$-critical manifold $\mathcal{C}_2$ \eqref{eqn:subcrit}.

We introduce new variables $\delta\varepsilon v= I$, $\delta\varepsilon w= Y$ and $\delta u = T$, system \eqref{red_sys} becomes

\begin{align}
   S' =&\; \delta\varepsilon \left(-\beta  S(v+\alpha w)+P\right), \nonumber \\
   \delta\varepsilon v' =&\; \delta\varepsilon \beta  S(v+\alpha w) -\delta\varepsilon\gamma_1 v,  \nonumber\\
    \delta u'=&\; \delta\varepsilon ( \gamma_1 v -  u),\\
    P' =&\;  \delta\varepsilon(1 +  u (1-\delta) -\nu \beta P (v +\alpha w) - (S+\delta\varepsilon v +2P +\delta\varepsilon w)) , \nonumber\\
  \delta\varepsilon  w' =&\; \delta\varepsilon\nu \beta P(v+\alpha w) -\delta\varepsilon\gamma_2 w, \nonumber
\end{align}
with $S,P$ slow, $v,w$ fast and $u$ intermediate. Rescaling to the slow time variable $\tau_2 = \delta\varepsilon t$, we obtain
\begin{align}
   \Dot{S} =&\;  \left(-\beta  S(v+\alpha w)+P\right), \nonumber \\
   \delta\varepsilon  \Dot{v} =&\;  \beta  S(v+\alpha w) -\gamma_1 v,  \nonumber\\
    \varepsilon  \Dot{u} =&\;  (\varepsilon \gamma_1 v -  u),\\
     \Dot{P} =&\;  1 +  u (1-\delta) -\nu \beta P (v +\alpha w) - (S+\delta\varepsilon v +2P +\delta\varepsilon w) , \nonumber\\
  \delta\varepsilon   \Dot{w} =&\; \nu \beta P(v+\alpha w) -\gamma_2 w, \nonumber
\end{align}

and where now the overdot represents the derivative with respect to the slow time variable $\tau_2$.

The evolution of $S$ and $P$ on $2$-critical manifold \eqref{eqn:subcrit} is dictated by the following ODEs:
\begin{align}\label{eqn:slowsl}
    \Dot{S} =&\;  P, \nonumber \\
    \Dot{P} =&\; 1 - S- 2P. 
\end{align}
Hence, in this time scale, $S$ increases, whereas $P\rightarrow 0$, as we will see shortly. 

The linear ODEs \eqref{eqn:slowsl} can be solved explicitly, recalling that at the beginning of the slow flow $(S,P)=(S_\infty,P_\infty+T_\infty)$.
 It is more convenient to perform the computations by including the variable $R$, so that
\begin{align}
\label{sol_slow}
    R(\tau_2) =&\; R_\infty e^{-\tau_2}, \nonumber\\
    P(\tau_2) =&\; (P_\infty+T_\infty) e^{-\tau_2} +\tau_2 R_\infty e^{-\tau_2}  = (1-S_\infty)\tau_2  e^{-\tau_2} + (P_\infty+T_\infty)(1-\tau_2 ) e^{-\tau_2},
    \\
    S(\tau_2) =&\; 1 - (P_\infty+T_\infty) e^{-\tau_2} - R_\infty (1 + \tau_2) e^{-\tau_2} = 1 -  (1-S_\infty)(1+\tau_2 ) e^{-\tau_2}+(P_\infty+T_\infty) \tau_2  e^{-\tau_2}.\nonumber
\end{align}
As a consequence of analysis carried out in Section \ref{sec:entrex}, the exit time $T_E$ satisfies the equation
\begin{equation}
    \label{entry_exit}
    - \gamma T_E + \beta \int_0^{T_E} (S(x) + \alpha \nu P(x))\, dx = 0.
\end{equation}
To elaborate further on this formula, notice that if the dynamics reach a neighbourhood of $\mathcal{C}_2$ in finite time, then the time it took to get there is $\mathcal{O}(\delta)$ with respect to the time it will spend close to $\mathcal{C}_2$. Hence, assuming $\delta$ is small enough, we can ignore the intermediate time scale when computing the exit time, see Figure \ref{fig:entrex} for a visualization.

Since the $2$-critical manifold $\mathcal{C}_2$ \eqref{eqn:subcrit} does not lose stability as part of the $1$-critical manifold $\mathcal{C}_1$ \eqref{crit_manif}, orbits may leave the $2$-critical manifold only when simultaneously leaving the $1$-critical manifold as well. 

Moreover, since $S\rightarrow 1$ on the slow time scale and $\RO>1$ means $\beta>\gamma$, the eigenvalue which provides the change of stability of the $1$-critical manifold $\lambda_2$ will eventually become and remain positive under the slow flow, ensuring an exit.

\begin{figure}[H]
    \centering
    \begin{tikzpicture}
   \node at (0,0){\includegraphics[width=0.6\textwidth]{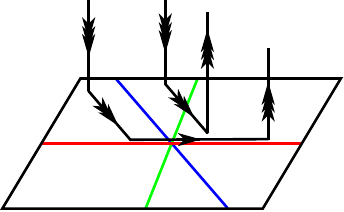}};
   	\node at (3,-3) {$\mathcal{C}_1$};
   	\node at (3,-1.5) {\textcolor{red}{$\mathcal{C}_2$}};
	\node at (-2.5,-0.5) {$\mathcal{O}(1/\varepsilon)$};
	\node at (1,-1.35) {$\mathcal{O}(1/\varepsilon\delta)$};
	\end{tikzpicture}
    \caption{Black plane: $1$-critical manifold $\mathcal{C}_1=\{ I=Y=0\}$ {(a 3D manifold in $\R^5$)}; red line: $2$-critical manifold $\mathcal{C}_2=\{ I=Y=T=0\}$ {(a 2D manifold in $\R^5$)}; green line: loss of hyperbolicity line, where $\lambda_2=0$; {blue line: above, condition \eqref{exit_inter} is satisfied, below it is not}. We distinguish between two cases: in the first, the fast flow (triple arrows) lands on $\mathcal{C}_1$, approaches $\mathcal{C}_2$ in a time $t=\mathcal{O}(1/\varepsilon)$ (intermediate flow, double arrows), then flows along $\mathcal{C}_2$ for a time $t=\mathcal{O}(1/\varepsilon\delta)$ (slow flow, single arrow), and finally exits a neighbourhood of the $1$-critical manifold. In the second one, the fast flow lands close to the green line, and the entry-exit happens already on the intermediate time scale (see Section \ref{sec:entrex} and \eqref{entry_exit} for details).}
    \label{fig:entrex}
\end{figure}
\section{The system as a sequence of discrete maps}\label{sec_map}
We can summarize the behaviour of the system for $\varepsilon, \delta \approx 0$ through two maps, the first one describing the fast scale, the second one either the intermediate (in case the systems exits from there to the fast scale) or the intermediate plus slow scales (otherwise).

We assume that the system starts the fast scale at values of $I_0, Y_0 \approx 0$ and with values of $S_0$ and $P_0$ such that $\RO^f > 1$ (recall \eqref{R0_fast}). This condition can be usefully rewritten as $(S_0,P_0) \in \Lambda_+$ by introducing the function
\begin{equation}
    \label{linear_map}
    L(S,P)  := \frac{\beta}{\gamma} \left( S + \alpha \nu P \right) - 1,
\end{equation}
and the sets
\begin{align*} \Lambda_+ &= \{(S,P) \in \R^2_+ : S+P \le 1,\ L(S,P) > 0\}, \\
 \Lambda_- &= \{(S,P) \in \R^2_+ : S+P \le 1,\ L(S,P) < 0\} \\ \mbox{ and } \quad  \Lambda_0 &= \{(S,P) \in \R^2_+ : S+P \le 1,\ L(S,P) = 0\}.
 \end{align*}
Under those conditions, the fast system converges to a point  $(S_\infty, P_\infty) \in \Lambda_-$; these values can be obtained by solving $H(S_\infty) = 0$ (see \eqref{conserv2}) and using \eqref{conserv1}. 

First of all, we denote by $F=(F_1,F_2)$ this map from $\Lambda_+$ into $\Lambda_-$. In formulae, we define $F_1(S,P)$ equal to the smallest positive root of $H(S)=0$, with $H$ defined in \eqref{conserv2}, while 
\begin{equation}\label{F2def}
F_2(S,P) = P \left( \frac{F_1(S,P)}{S}\right)^\nu + S - F_1(S,P). \end{equation}
Although the map $F$ is defined only in $\Lambda_+$, we can extend it with continuity to $\Lambda_0$ obtaining that all points of $ \Lambda_0$ are fixed points.

Furthermore, during the fast phase $T$ reaches the value $T_\infty = T_0 + S_0 - S_\infty$.

To include this third variable in the discrete map, we extend the sets $\Lambda_+$, $\Lambda_-$ and $\Lambda_0$ to
$$ \tilde\Lambda_p =\{(S,P,T)\in \R^3_+: S+P+T \le 1,\ (S,P) \in \Lambda_p\} $$
with $p = +,\ -$ or $0$. 

Then we consider the map $\tilde F=(\tilde F_1,\tilde F_2,\tilde F_3)$ from $ \tilde \Lambda_+$ into $\tilde \Lambda_-$ defined through
\begin{equation}
    \tilde F_1(S,P,T)=F_1(S,P) \quad \tilde F_2(S,P,T)=F_2(S,P) \quad \tilde F_3(S,P,T)=T+S-F_1(S,P).
\label{def_F_tilde} 
\end{equation}

During the intermediate time-scale, $P$ would increase towards the value $P_\infty + T_\infty$, while $S$ does not change. There are two possibilities, as noticed in Section \ref{sec:entrex}: either the system eventually re-enters the region $\Lambda_+$ and exits the $1$-critical manifold $\mathcal{C}_1$ at a time given by the entry-exit map; or the point $(S_\infty,P_\infty + T_\infty ) \in \Lambda_- \cup \Lambda_0$, in which case the system will reach the $2$-critical manifold $\mathcal{C}_2$. The first case occurs instead when $(S_\infty,P_\infty + T_\infty ) \in \Lambda_+$, which is equivalent to  \eqref{exit_inter}.

If $(S_\infty,P_\infty,T_\infty)=\tilde F(S_0,P_0,T_0)$ satisfies \eqref{exit_inter}, then we can define $\tilde G(S_\infty,P_\infty,T_\infty)$ through the entry-exit map defined in Section \ref{sec_inter}. Precisely, the exit time $t_E$ will be the root of $\f(t_E) = 0$ where $\f$ is defined in \eqref{equat_exit_ind}.

Then 
\begin{equation}
    \label{G_case1}
    \tilde G_1(S_\infty,P_\infty,T_\infty)= S_\infty\quad \tilde G_2(S_\infty,P_\infty,T_\infty)=P_\infty+T_\infty(1-e^{-t_E})\quad \tilde G_3(S_\infty,P_\infty,T_\infty)=T_\infty e^{-t_E}.
\end{equation}

Note that the map $\tilde G$ can be defined also if $(S_\infty,T_\infty) \in \Lambda_0$; in that case $t_E=0$, so that all points in $\tilde \lambda_0$ are fixed points of $\tilde G$.

In the original time scale the return time between one epidemic and the next one is approximately equal, as $\delta -> 0$, to $t_E/\delta$, since the time of an epidemic is negligible in this limit.

On the other hand, if $\beta (S_\infty + \alpha \nu (P_\infty+ T_\infty)\le \gamma$, the system enters the slow time scale close to
$(S=S_{\rm fin}  = S_\infty, P = P_{\rm fin} = P_\infty + T_\infty, T = 0)$
In this time-scale, the system moves from he point $(S_{\rm fin} , P_{\rm fin}) \in \Lambda_-$ to a point $(S(T_E), P(T_E)) \in \Lambda_+$, where the functions $S(x)$ and $P(x)$ are shown in \eqref{sol_slow} with $P_\infty = P_{\rm fin}$ and $R_\infty = 1 - S_{\rm fin} - P_{\rm fin}$, while $T_E$ is found by solving \eqref{entry_exit}.

We can then define
 $G_1(S_{\rm fin} , P_{\rm fin}) =S(T_E)$, $G_2(S_{\rm fin} , P_{\rm fin})= P(T_E)$ and extend this to a function 
$\tilde G : \tilde \Lambda^- \to \tilde \Lambda^+$ through
\begin{equation}
    \label{G_case2}
   \tilde G_1(S,P,T) = G_1(S,P+T),\qquad \tilde G_2(S,P,T) = G_2(S,P+T),  \qquad \tilde G_3(S,P,T) = 0.
   \end{equation}

In summary, we can summarize the behaviour of the system between the start of an epidemic and the start of the next one through a discrete map $G \circ F$ where
$$ \tilde F : \tilde \Lambda_+ \to \tilde \Lambda_- $$
is defined through \eqref{def_F_tilde}, while
$$ G : \tilde \Lambda_- \to \tilde \Lambda_+ $$
that has two possible definitions (\eqref{G_case1} or \eqref{G_case2}) depending in whether $(S_\infty,P_\infty + T_\infty)$ is in $\Lambda_+$ or not.

In Figure \ref{fig:disc_cont}, we compare the singular solutions built through the discrete maps to the numerical solutions of \eqref{red_sys} computed with $\delta$ and $\varepsilon$ small. It appears that indeed the singular solutions approximate well \eqref{red_sys} for reasonable values of $\delta$ and $\varepsilon$.
\begin{figure}[H]
   \begin{subfigure}[t]{0.49\textwidth}
 \includegraphics[width=\textwidth]{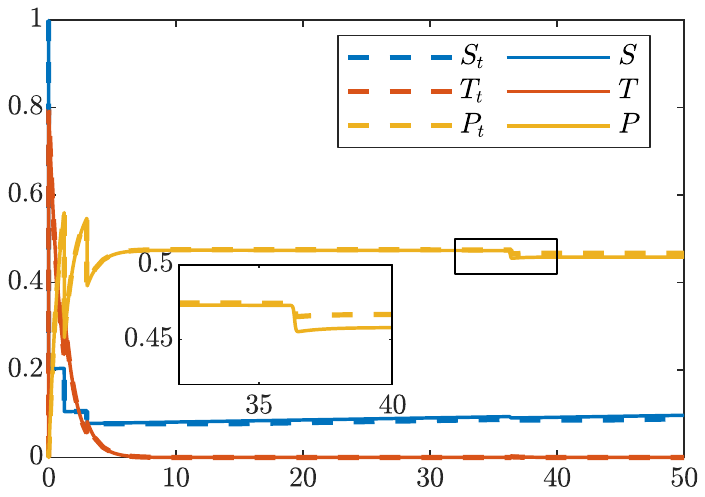}
     \caption{}
     \label{fig:disc1}
\end{subfigure}
\hfill
\begin{subfigure}[t]{0.49\textwidth}
 \includegraphics[width=\textwidth]{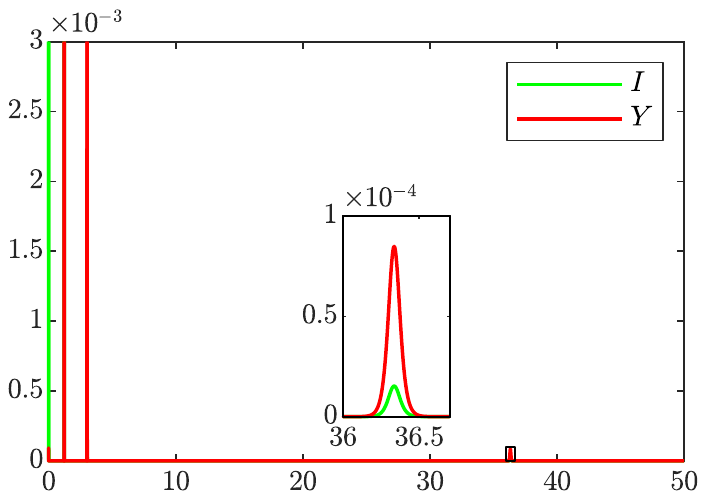}
         \caption{ }
     \label{fig:disc2}
\end{subfigure}
\caption{(a) comparison of the discrete mappings (dashed lines, subscript $t$) with the numerical integration of system \eqref{red_sys} (solid lines). Parameter values are $\beta=2$, $\alpha=0.8$, $\gamma_1 = \gamma_2 = 1$, $\nu = 1.1$, $\delta = 10^{-3}$, $\varepsilon= 4.8\times 10^{-5}$; initial values are $(S,I,T,P,Y) = (0.999,10^{-5},10^{-3},0,10^{-5})$. The units in the $x$-axis correspond to the intermediate time-scale $\tau_1$. (b) The infectives $I$ and $Y$ in the numerical solution of system \eqref{red_sys}. Notice a big epidemic dominated by $I$ at $\tau_1\approx 0$, followed by two large epidemics, dominated by $Y$ at times $\tau_1 < 5$; afterwards, in the discrete approximation, the system enters the slow time scale until a smaller epidemic at $\tau_1 \approx 36$, visible in the inset. This epidemic corresponds to a ``dip'' in the time series of $P$, enlarged in the inset of (a).}
\label{fig:disc_cont}
\end{figure}

\section{Numerical Simulations}\label{sec:numeric}
\subsection{Bifurcation analysis}\label{sec_bifurc}

The bifurcation analysis of system \eqref{red_sys} was carried out with MATCONT \cite{dhooge2003matcont}. We focus on the role of $\beta$, the infection rate of totally susceptible individuals by first time infectious individuals, and its interplay with $\alpha$, the multiplicative parameter which distinguishes infectiousness of secondary vs.\ primary infections. We showcase how $\beta$ influences the stability of the endemic equilibria, and, in particular, the value(s) of $I$ at the equilibria.

From the analysis in Section \ref{sec_eqstab} we know that, if $\alpha \nu > 1$, we can distinguish between three parameter regions: for $\RO < R^*$, the only equilibrium is the DFE (which we proved to be globally stable when $\RO < 1/\alpha\nu$); for $R^* < \RO < 1$, the DFE is still locally asymptotically stable, but there exist also two endemic equilibria; for $\RO > 1$, the DFE is unstable and there exists a unique endemic equilibrium. 

\begin{figure}[H]
     \centering
     \begin{subfigure}[b]{0.49\textwidth}
         \centering
       \begin{tikzpicture}
   \node at (0,0){\includegraphics[width=\textwidth]{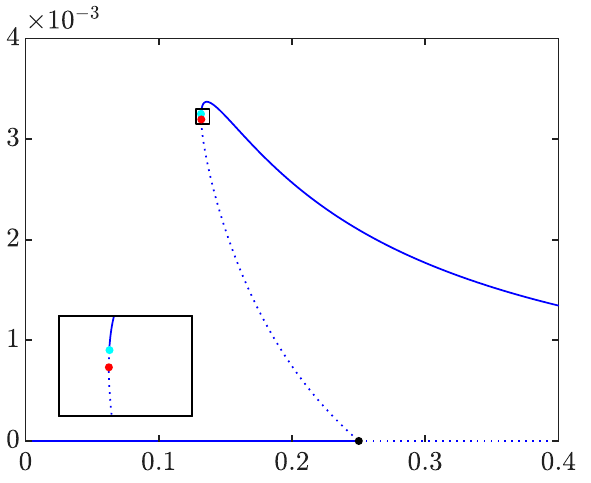}};
    \node at (-0.8,-3.2) {\large $\beta$};
    \node at (-4,-0.5) {\large $I$};
   \node at (1.1,-2.4) {BP};
    \node at (-2.2,-1.4) {H};
    \node at (-2.1,-1.8) {LP};
	\end{tikzpicture}
            \caption{\label{bifurc_a}}
    \end{subfigure}
     \hfill
     \begin{subfigure}[b]{0.49\textwidth}
         \centering
    \begin{tikzpicture}
   \node at (0,0){\includegraphics[width=\textwidth]{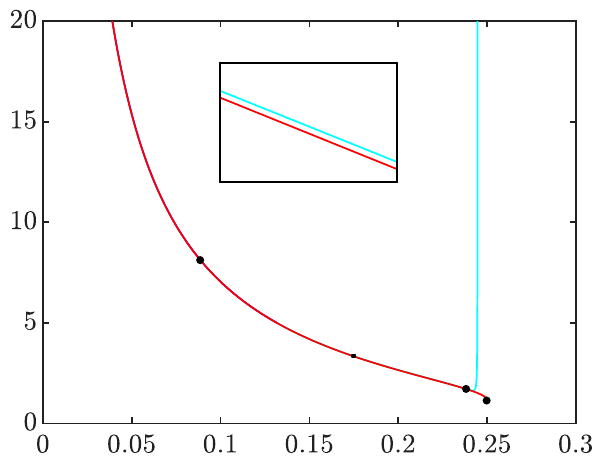}};
      \node at (-0.8,-3.2) {\large $\beta$};
    \node at (-4,-0.5) {\large $\alpha$};
  \node at (-1,0) {BT};
   \node at (2.1,-1.8) {BT};
   \node at (3,-2.2) {C};
	\end{tikzpicture}
             \caption{\label{bifurc_b}}
      \end{subfigure}
     \caption{Values of the parameters: $\beta$ varying as shown, $\alpha = 5$ (a), $\nu = 0.9$, $\gamma_1 = \gamma_2 = 0.25$, $\delta = 0.05$, $\varepsilon = 0.05$. (a): $\beta$ against values of $I$ at equilibrium, both DFE ($I=0$) and EE; solid line: stable; dashed line: unstable. At $\beta=0.25$, $\RO=1$, and the system exhibits a Branching Point (BP). Inset: zoom-in in the small region containing a subcritical Hopf bifurcation (H), from which an unstable branch of limit cycles arises, and a Limit Point (LP); (b) two parameter bifurcation diagram in $\beta$ and $\alpha$, continuing the LP (red) and the H (cyan)  from figure (a).}
     \label{bifurc}
\end{figure}

 The bifurcation analysis in Figure \ref{bifurc_a} illustrates these results, and also allows to study the stability of the endemic equilibria. For $\RO > 1$ (corresponding to $\beta > \beta_\text{BP} = 0.25$), the unique endemic equilibrium is always asymptotically stable, while the DFE is unstable; the DFE becomes asymptotically stable as $\beta$ decreases through $\beta_\text{BP}$ with a transcritical (backward) bifurcation, giving rise to a branch of (unstable) endemic equilibria for $\RO < 1$; this branch turns around at $\beta = \beta_\text{LP}$ through a fold (saddle-node) bifurcation. Hence, for $\beta \in (\beta_\text{LP},\beta_\text{BP})$  there are two endemic equilibria as proved in Section \ref{sec:endemic};   the upper endemic equilibrium arises at $\beta_\text{LP}$ as an unstable equilibrium and becomes stable (through a subcritical Hopf bifurcation) at $\beta = \beta_\text{H}$. For $\beta > \beta_\text{H}$, the  upper endemic equilibrium is asymptotically stable. 

From this analysis, we deduce that if $\beta $ belongs to the (very small) interval $(\beta_\text{LP},\beta_\text{H})$, both endemic equilibria are unstable, and presumably all solutions are attracted to the DFE. 

The phenomenon is further investigated in Figure \ref{bifurc_b}, where we present a two parameter bifurcation diagram in $\beta$ and $\alpha$. Figure \ref{bifurc_b} shows a curve (in red) of fold bifurcation (LP) points and another (in cyan) of Hopf bifurcation points; the two curves intersect at the two Bogdanov-Takens (BT) points (with purely imaginary eigenvalues), and are close but separate otherwise (see inset). The LP curve ends in a Cusp point (C), where it joins the DFE at $\RO = 1$ and $\alpha \nu = 1$; the H curve is continued by MATCONT beyond both BT; however, in these regions the H curves actually represent Neutral Saddles, and not Hopf points, which only exist between the two BT. There could exist  a curve of homoclinic bifurcation points between the two BT points, where the unstable periodic solutions arising from the Hopf points disappear; however, we were not able to compute this through the use of MATCONT.

Fixing $\alpha=5$, from Figure \ref{bifurc_a} we select two values of $\beta$, namely $\beta=0.1324$ and $\beta =0.15$. We refer to Figures \ref{fig:both_spiral} and \ref{fig:both_other}, respectively, for a visualization of projections on the $(S,I)$ plane of orbits of the system with these values of the parameters. 

In Figure \ref{fig:both_spiral}, we observe that the endemic equilibrium is \emph{almost} globally asymptotically stable. Even though $\RO=0.6<1$, the basin of attraction of the DFE is rather small, compared to the one of the EE.

In Figure \ref{fig:both_other}, we observe a non-trivial distribution of points belonging to the basins of attraction of the EE and of the DFE. This ``mixing'' is due to the fact that we are observing a 2D projection of 5D orbits.

\subsection{Endemic equilibrium with $\RO<1$}\label{sec_num}

In Theorem \ref{existence_EE}, we proved the existence of the endemic equilibrium even when $\RO < 1$. However, as a consequence of Lemma \ref{lemma_DFE} and Theorem \ref{glob_stab}, we know that there exists a region of the parameter space in which $\RO <1$ but the system may converge to the endemic equilibrium.

In this section, we perform some numerical simulations of the model in order to illustrate this behaviour. We start with $50$ random initial conditions and we plot the trajectories of the system in the plane $(S,I)$. {This means that we are projecting the full 5D system \eqref{red_sys} onto a 2D manifold, which explains the seemingly overlapping orbits.} Varying the value of $\RO$ and the product $\alpha \nu$, we obtain different scenarios.

We use the same parameters fixed in the bifurcation analysis and we set $\alpha=5$. The value of $\beta$ changes accordingly to the bifurcation diagram presented in Figure \ref{bifurc_a}.

In Figures \ref{fig:both_spiral} and \ref{fig:both_other}, the system converges to both equilibria, depending on the initial conditions. {The behaviour in the two cases is different: in \ref{fig:both_other} there is a clear separation of the trajectories which converge to the EE and to the DFE, while in \ref{fig:both_spiral} there is no separation, which is due to the non-trivial contribution of the variables which we are not plotting.}

\begin{figure}[H]
   \begin{subfigure}[b]{0.49\textwidth}
 \includegraphics[width=\textwidth]{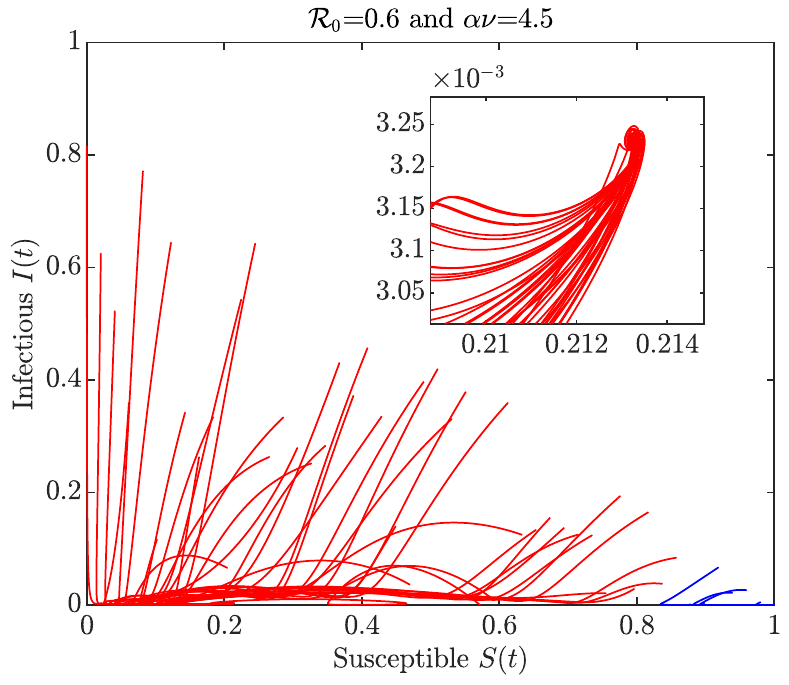}
     \caption{$\beta=0.1322$}
     \label{fig:both_spiral}
\end{subfigure}
\hfill
\begin{subfigure}[b]{0.49\textwidth}
 \includegraphics[width=\textwidth]{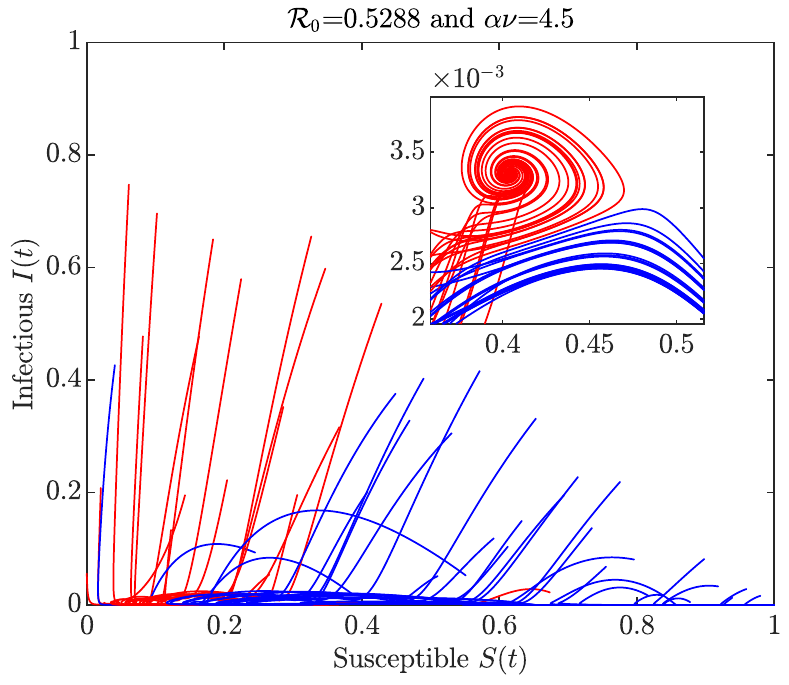}
         \caption{$\beta=0.15$}
     \label{fig:both_other}
\end{subfigure}
\caption{Trajectories on the plane $(S,I)$ starting from $50$ different initial conditions. Blue trajectories converge to the Disease Free Equilibrium, red orbits to the Endemic Equilibrium.}
\end{figure}

\subsection{The role of partial immunity}
In this Section, we analyse the role of partial immunity by varying the value of the parameter $\nu$; for each value of $\nu$, we compare the numerical integration of system \eqref{red_sys} with the discrete mappings described in Section \ref{sec_map}. 

All simulations start with the introduction of the infection in an almost completely susceptible population; hence there is immediately a very big epidemic, represented through the (almost) vertical lines at the left of each figure, in which $S$ decreases (in the fast time scale) from 1 to around 0.2 (as $\RO = 2$). We then show the the three variables, $S$, $P$ and $T$, evolving in the intermediate time scale $\tau_1$. If $\nu = 0$, the second epidemic occurs at $\tau_1$ beyond 200 and is very large, as can be seen by the values (around 40\%) reached by $T$; then a third large epidemic occurs after another long interval, and the system converges very slowly (not shown) towards the endemic equilibrium. Increasing the value of $\nu$, the second epidemic occurs sooner and is smaller, and convergence to the endemic equilibrium, via damped oscillation, occurs much faster. Figures \ref{fig:nu0}, \ref{fig:nu01}, \ref{fig:nu02} and \ref{fig:nu03} illustrate the cases of $\nu=0$, $0.1$, $0.2$ and $0.3$, respectively.

\begin{figure}[H]
   \begin{subfigure}[t]{0.49\textwidth}
     \caption{$\nu = 0$}
 \includegraphics[width=\textwidth]{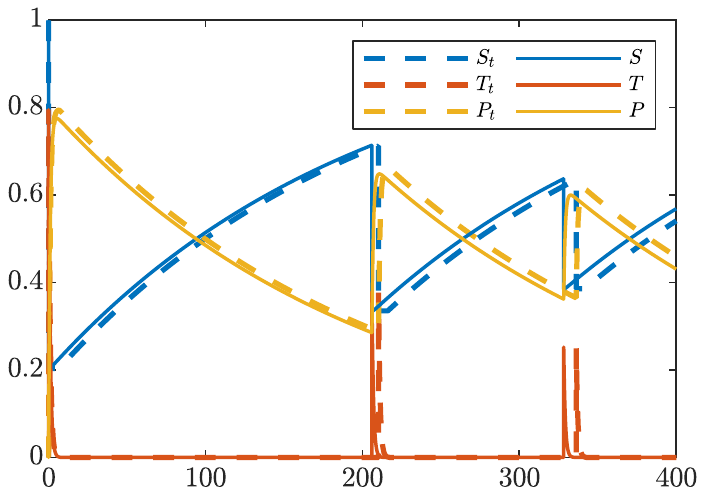}
     \label{fig:nu0}
\end{subfigure}
\hfill
\begin{subfigure}[t]{0.49\textwidth}
         \caption{$\nu = 0.1$}
 \includegraphics[width=\textwidth]{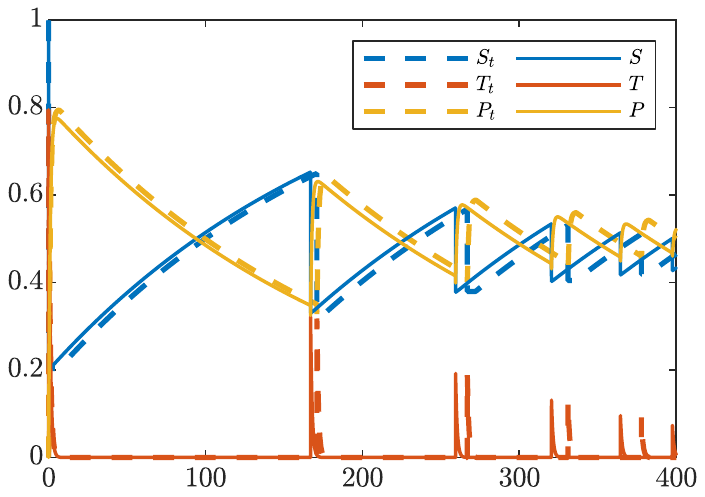}
     \label{fig:nu01}
\end{subfigure}
\hfill
\begin{subfigure}[t]{0.49\textwidth}
       \caption{$\nu = 0.2$}
   \includegraphics[width=\textwidth]{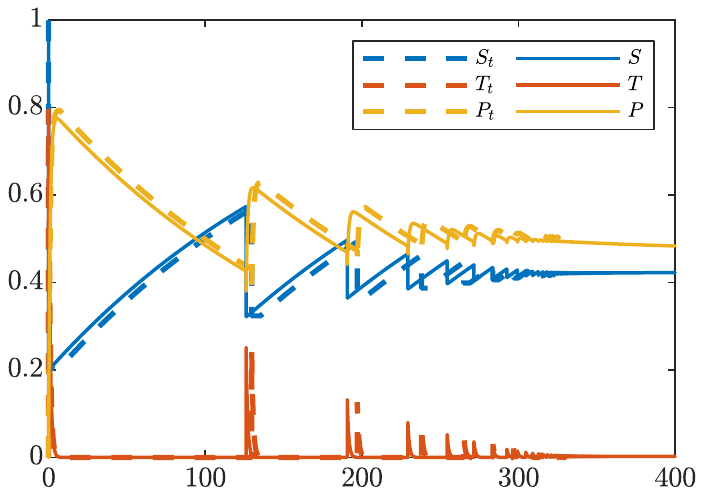}
     \label{fig:nu02}
\end{subfigure}
\hfill
\begin{subfigure}[t]{0.49\textwidth}
         \caption{$\nu = 0.3$}
 \includegraphics[width=\textwidth]{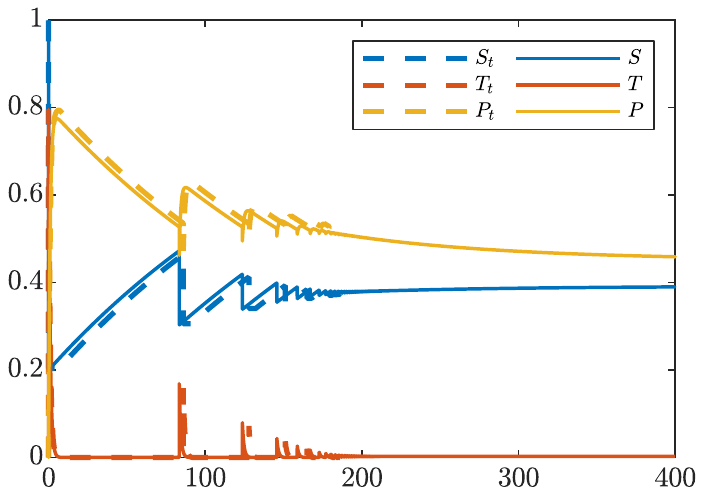}
     \label{fig:nu03}
\end{subfigure}
\caption{Comparison of the discrete mappings (dashed lines, subscript t) with the numerical integration of
system \eqref{red_sys} (solid lines) at varying of $\nu$. Parameter values are $\beta=2$, $\alpha=0.8$, $\gamma_1 = \gamma_2 = 1$, $\delta = 5 \times 10^{-3}$, $\varepsilon= 5 \times 10^{-5}$; initial values are $(S,I,T,P,Y) = (0.999,10^{-5},0,0,10^{-5})$. }
\label{fig:disc_nu}
\end{figure}

It has to be noted that, when the solution is close to the endemic equilibrium, the discrete approximation breaks down, as the solutions no longer arrive at $O(\varepsilon)$- distance from the critical manifold. This, as well as the fact that $\delta$ is small but not infinitesimal, can explain some minor disagreements between the system and their discrete approximations.
\section{Conclusions}\label{sec_conc}
In this paper, we proposed and analysed a model which describes the evolution of a disease with secondary infections. From our assumptions on the parameters, this system naturally involves three distinct time scales.
The interplay between multiple time scales creates previously undocumented phenomena, such as the occurrence of epidemics at different distances in time; for instance, looking at Figure \ref{fig:disc_cont}, one sees that a large epidemic, occurring with the introduction of infection in a totally susceptible population, is followed, after a short interval, by a second and a third epidemic wave; then, there is a very long latent period before the next wave.

The basic reproduction ratio $\RO$ depends only on the parameters relative to the primary infection, $\beta$ and $\gamma_1$. 
However, the parameters, $\alpha$, $\nu$ and $\gamma_2$, involved in a secondary infection, contribute to what we called the ``fast'' reproduction ratio, $\RO^f$, which determines the possibility of an epidemic with a certain faction of totally and partially susceptible individuals.

Moreover, the parameters $\alpha$, $\nu$ and $\gamma_2$, involved in a secondary infection, may allow for a sub-threshold endemic equilibrium; indeed, when $\alpha\nu\frac{\gamma_1}{\gamma_2} > 1$ (and $\delta,\varepsilon \approx 0$), the bifurcation of the DFE at $\RO = 1$ is backwards, giving rise to a branch of positive equilibria for $\RO < 1$, similarly to what obtained in \cite{steindorf2022modeling}.

From a biological point of view, a secondary infection should be milder, so that one expects $\alpha \le 1,\ \nu \le 1,\ \gamma_2 \ge \gamma_1 $; under such assumptions, backward bifurcation cannot occur. However, exactly because a secondary infection is milder, it is possible that individuals have more contacts during a secondary than in a primary, and isolate themselves for shorter periods, thus leading to $\alpha > 1$ and $\gamma_1 > \gamma_2$. Disease-induced mortality (neglected in the model for the sake of simplicity), much higher in a primary than in a secondary infections, would also lead to $\gamma_1$ becoming larger. Therefore, it seems reasonable to assume that $\alpha\nu\frac{\gamma_1}{\gamma_2} > 1$ in certain cases, leading to bistability in the system for $R^* < \RO < 1$ .

Through Figure \ref{fig:disc_nu} we study the effect of partial susceptibility; we show that introducing even a limited susceptibility of individuals recovered from a primary infection has a strong stabilizing effect on infection dynamics. 
When $\nu = 0$, the system goes through a long period with extremely low infection prevalence interspersed with a few large epidemics, before eventually settling to the endemic equilibrium; if $\nu$ is increased to $0.1$-$0.3$, the convergence to the endemic equilibrium is much faster and the epidemic waves, following the first one, are much less intense.

The stabilizing effect of partial immunity can be seen also by comparing the results shown in the bifurcation analysis, Fig.~\ref{bifurc}, with what had been found in the SIRWS model \cite{Dafilis2012a}. In the parameter region that we explored (that includes cases with $\alpha$ and $\nu$ very different from those of Fig.~\ref{bifurc}), the unique (resp. upper) endemic equilibrium for $\RO > 1$ (resp. $R^* < \RO < 1$)  is asympotically stable, except for a tiny interval when $ \RO \approx R^*$. On the other hand, Dafilis \textit{et al.} \cite{Dafilis2012a} found supercritical Hopf bifurcation points for a large interval of $\nu$ values. As mentioned in the Introduction, setting $\alpha =0$ the current model becomes very similar to an SIRWS model, except for the fact that an infection provides only partial immunity, while complete immunity is reached only after a boosting episode. Hence, we believe that partial immunity after a primary infection is the main reason why the results obtained on the stability of the endemic equilibrium differs from those in \cite{Dafilis2012a}.

From a mathematical point of view, our analysis relied mostly on geometric singular perturbation theory. Moreover, we made extensive use of the so-called \emph{entry-exit function}, in a novel setting involving three time scales, in order to distinguish between the cases where the slowest time scale manifests itself in the limiting behaviour of the system or not, using geometric criteria.

A natural yet burdensome generalization of the model we analysed here could include e.g. additional mortality rate in both infectious compartments. However, this would significantly increase the complexity of the model, unless a system of ODEs for the fraction of individuals in each compartment is developed first, as a dimensionality reduction would be more challenging. On the other hand, this would shift the challenges from an additional dimension of the system to a more complex set of ODEs.

Alternatively, one could generalize our modelling approach to a compartmental model describing $n$ consecutive infections. This has been done already for systems with no explicit time scale separation but not, to the best of the authors' knowledge, for system evolving on multiple time scales. In \cite{guo2012global}, for example, the authors assume that infectious individual can move both forward and backward on the chain of stages, in order to incorporate both a natural disease progression and the amelioration due to the effects of treatments. On the other hand, in \cite{bichara2014global}, the authors consider an $n$ strain model, both without immunity and with immunity for all the strains. \\
\\
\noindent \textbf{Acknowledgements.} Panagiotis Kaklamanos was supported by the EPSRC grant
``EP/W522648/1 Maths Research Associates 2021''. Andrea Pugliese, Mattia Sensi and Sara Sottile were supported by the Italian Ministry for University and Research (MUR) through the PRIN 2020 project ``Integrated Mathematical Approaches to Socio-Epidemiological Dynamics'' (No. 2020JLWP23).

Andrea Pugliese and Sara Sottile are members of the ``Gruppo Nazionale per l'Analisi Matematica e le sue Applicazioni" (GNAMPA) of the ``Istituto Nazionale di Alta Matematica" (INdAM).

\appendix
\section{Proof of Theorem \ref{existence_EE}}\label{app:proof}
In order to compute the expression of the endemic equilibrium, we consider system \eqref{sys_simpler}.
We first notice that 
$$
    T'=0 \implies T= \frac{\gamma_1}{\varepsilon} I \; \text{ and } \;  R'=0  \implies R =\frac{\gamma_2}{\delta \varepsilon} Y.
$$
Moreover,
$$
Y' = 0 \implies \nu \beta P (I + \alpha Y ) = \gamma_2 Y. 
$$
We substitute the previous equations in the equation $P'=0$, obtaining
$$
\gamma_1 I - \gamma_2 Y - \delta \varepsilon P + \gamma_2 Y = 0  \implies I = \frac{\delta \varepsilon}{\gamma_1} P \; \text{ and } \; T = \delta P .
$$
From the equations $I'=0$ and $Y'=0$ we know that 
$$I+\alpha Y = \dfrac{\gamma_1 I}{\beta S} \; \text{ and }\; I+\alpha Y = \dfrac{\gamma_2 Y}{\nu \beta P},$$
thus
$\nu \frac{P}{S} = \frac{\gamma_2 Y}{\gamma_1 I}$, from which 
\begin{equation}\label{ypsilon_end}
Y = \frac{\nu \gamma_1}{\gamma_2} \frac{P I}{S} = \frac{\delta \varepsilon \nu}{\gamma_2 }\frac{P^2}{S}.
\end{equation}

Then, 
\begin{equation}\label{S_start}
I'=0  \implies S = \frac{\gamma_1}{\beta} \left( 1- \frac{\beta \alpha \nu P}{\gamma_2}\right).
\end{equation}

Substitute in \eqref{ypsilon_end}, we obtain
$$
Y = \frac{\delta \varepsilon \nu}{\gamma_2} \dfrac{P^2}{\dfrac{\gamma_1}{\beta} - \dfrac{\gamma_1 \alpha \nu}{\gamma_2}P} \; \text{ and } \; R= \dfrac{\nu P^2}{\dfrac{\gamma_1}{\beta} - \dfrac{\gamma_1 \alpha \nu}{\gamma_2}P}.
$$
Notice that, since by assumption on the endemic equilibrium $Y,R > 0 $, we need 
\begin{equation}\label{P_condi}
P < \frac{\gamma_2}{\beta \alpha \nu} = \frac{Q}{ \RO},
\end{equation}
where 
$$ Q = \frac{\gamma_2}{\gamma_1 \alpha \nu  }$$
is a quantity that will be used repeatedly, and  $\RO = \beta/\gamma_1$.

Since we know that $ S+ I + T + P + Y + R = 1$, we can substitute and obtain
\[
\left(1-\frac{1}{Q }+\dfrac{\delta \varepsilon}{\gamma_1} + \delta \right) P + \left(\dfrac{\delta \varepsilon}{\gamma_2}+1\right) \left(\dfrac{\nu P^2}{\dfrac{1}{\RO} - \dfrac{P}{Q}} \right) = 1-\dfrac{1}{\RO}.
\]
For ease of notation, we rename $x := P$. Then, finding an endemic equilibrium is equivalent to finding the solution of 
\begin{equation}
A x + B  \dfrac{x^2}{\dfrac{1}{Q}- x} - 1 +\dfrac{1}{\RO} = 0,
\label{f_endemic}   
\end{equation}
where $A=\left(1-\dfrac{1}{Q} +\dfrac{\delta \varepsilon}{\gamma_1} + \delta \right) $ and $B= \dfrac{\gamma_2 + \delta \varepsilon}{\gamma_1 \alpha} >0$.

After multiplying each side by $\left(\dfrac{1}{Q}- x\right)$ and rearranging the terms, we can rewrite \eqref{f_endemic} as $f(x)= 0$, where
\begin{equation}
f(x) := (B-A)x^2 + \left(1- \dfrac{1}{\RO} + A\dfrac{Q}{\RO} \right)x +\dfrac{Q}{\RO} \left(\dfrac{1}{\RO}-1\right) = ax^2 + b x + c = 0.
\label{def_f_endemic}   
\end{equation}
where the coefficients of the second degree polynomial are:
\begin{equation}
 \begin{split}
\label{abc}
a &= B- A =  \nu Q - 1 + \frac1Q +O(\delta),  \\
b &= 1 - \frac{1}{\RO } + A \frac{Q}{\RO }=1 - \frac{2}{\RO} + \frac{Q}{\RO}+O(\delta),\\
c &= \frac{Q}{\RO} \left(\frac{1}{\RO} -1 \right).
\end{split}   
\end{equation}

We identify the following cases:
\begin{itemize}
    \item Case 1:   $\RO>1$.

If $\RO >1$, then $f(0)=c <0$; we can rewrite
$$ f(x) = B x^2 + Ax \left(\dfrac{Q}{\RO} - x\right)+ \left(1- \dfrac{1}{\RO} \right) \left(x- \dfrac{Q}{\RO}\right),$$
which implies
$$ f\left(\frac{Q}{ \RO}\right) = B \left(\frac{Q}{ \RO}\right)^2 > 0. $$

Thus, there exists a unique positive solution of $f(x)=0$ in the interval $\left(0,\frac{Q}{ \RO}\right)$, and we can conclude 
\begin{lemma}
\label{lemma_R0.gt.1}
If $\RO >1$, the system admits a unique EE.
\end{lemma}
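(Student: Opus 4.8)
The plan is to take the quadratic reduction already carried out above as given and finish by a short sign analysis. Recall that, via the chain of identities $T=\delta P$, $I=\tfrac{\delta\varepsilon}{\gamma_1}P$, $S=\tfrac{\gamma_1}{\beta}\bigl(1-\tfrac{\beta\alpha\nu P}{\gamma_2}\bigr)$, $Y=\tfrac{\delta\varepsilon\nu}{\gamma_2}\tfrac{P^2}{S}$ and $R=\tfrac{\gamma_2}{\delta\varepsilon}Y$, an endemic equilibrium of \eqref{sys_simpler} is entirely pinned down by its $P$-coordinate $x:=P$, and such an $x$ produces a biologically admissible equilibrium — all six components strictly positive and summing to one — exactly when $x$ is a root of the polynomial $f$ in \eqref{def_f_endemic} lying in the open interval $\bigl(0,\tfrac{Q}{\RO}\bigr)$. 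Here $x>0$ comes from $I,T,P>0$, the bound $x<\tfrac{Q}{\RO}$ from $S>0$ (equivalently from \eqref{P_condi}, which also gives $Y,R>0$), and the identity $S+I+T+P+Y+R=1$ is precisely \eqref{f_endemic}. So the lemma reduces to showing that $f$ has exactly one root in $\bigl(0,\tfrac{Q}{\RO}\bigr)$.

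The sign analysis is the following. Since $\RO>1$, \eqref{abc} gives $f(0)=c=\tfrac{Q}{\RO}\bigl(\tfrac{1}{\RO}-1\bigr)<0$. On the other hand, in the factored form $f(x)=Bx^2+Ax\bigl(\tfrac{Q}{\RO}-x\bigr)+\bigl(1-\tfrac{1}{\RO}\bigr)\bigl(x-\tfrac{Q}{\RO}\bigr)$ the last two summands vanish at $x=\tfrac{Q}{\RO}$, so $f\bigl(\tfrac{Q}{\RO}\bigr)=B\bigl(\tfrac{Q}{\RO}\bigr)^2>0$ because $B>0$. By the intermediate value theorem $f$ has at least one root in $\bigl(0,\tfrac{Q}{\RO}\bigr)$. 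It has at most one there as well: $f$ has degree at most two, and if it had two roots inside that interval it would take the sign of its leading coefficient at both endpoints $0$ and $\tfrac{Q}{\RO}$, contradicting $f(0)<0<f\bigl(\tfrac{Q}{\RO}\bigr)$. Hence there is a unique admissible $x$, and correspondingly a unique endemic equilibrium.

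The only places that call for a little care — and, honestly, the substantive work has already happened in the derivation preceding the lemma — are the two reductions that sandwich the root count. First, clearing the denominator when passing from the rational equation \eqref{f_endemic} to $f(x)=0$ could in principle create a spurious root at the pole $x=\tfrac{Q}{\RO}$; but we have just seen $f\bigl(\tfrac{Q}{\RO}\bigr)=B\bigl(\tfrac{Q}{\RO}\bigr)^2\neq0$, so the pole is never a root of $f$ and the two equations are genuinely equivalent on $\bigl(0,\tfrac{Q}{\RO}\bigr)$. Second, one should confirm that the $x$ produced yields a true equilibrium: positivity of $I,T,P$ is immediate from $x>0$, positivity of $S$ from $x<\tfrac{Q}{\RO}$, positivity of $Y$ and $R$ then follows from their formulas, and since the six components are positive and sum to one they are automatically each less than one. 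I would also note in passing that exactly the same polynomial $f$ and interval $\bigl(0,\tfrac{Q}{\RO}\bigr)$ are the objects one continues to work with for the $\RO<1$ case, where instead $f(0)>0$ and locating the roots becomes genuinely more delicate.
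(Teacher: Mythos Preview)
Your proof is correct and follows essentially the same route as the paper: evaluate $f$ at the endpoints $0$ and $Q/\RO$, observe the sign change $f(0)<0<f(Q/\RO)$ via the same factored form $f(x)=Bx^2+Ax\bigl(\tfrac{Q}{\RO}-x\bigr)+\bigl(1-\tfrac{1}{\RO}\bigr)\bigl(x-\tfrac{Q}{\RO}\bigr)$, and conclude existence and uniqueness from the degree of $f$. Your added remarks on the spurious pole and on positivity of the remaining components are careful touches that the paper leaves implicit.
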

\item Case 2: $\RO<1$. 

First of all, we rewrite $f$ as a function also of $\RO$ as
$$ G(x,\RO) = (B-A)x^2 + \left(1- \dfrac{1}{\RO} + A\dfrac{Q}{\RO} \right)x +\dfrac{Q}{\RO} \left(\dfrac{1}{\RO}-1\right). $$
It is immediate to see that $G(0,1) = 0$. As long as
$G_x(0,1) = A Q \not = 0$, the implicit function theorem let us find a branch of equilibria $x(\RO)$ such that $x(1) = 0$. Precisely, we obtain
$$x'(1)= - \frac{G_{\RO}(0,1)}{G_x(0,1) } = \frac{Q}{ AQ}=\frac{1}{A } = \frac{1}{\left(1-\dfrac{1}{Q} +\dfrac{\delta \varepsilon}{\gamma_1} + \delta \right) }.$$
If $x'(1) > 0$, it is impossible that $x'(R) < 0$ for some $R > 1$. Otherwise, one would have two solutions of \eqref{def_f_endemic} for some $\RO > 1$, and this goes against 
Lemma \ref{lemma_R0.gt.1}.

 In the case $G_x(0,1)=0$, one can obtain, looking at the second derivative of $\RO(x)$, the same result. 
 
 Hence, we obtain the following Lemma:
\begin{lemma}
\label{lem_backward_bif}
There exist solutions of $f(x)=0$ with $\RO < 1$ if and only if $G_x(0,1) = AQ < 0$.\\
Since $Q > 0$ and $A=\left(1-\dfrac{1}{Q} +\dfrac{\delta \varepsilon}{\gamma_1} + \delta \right) = \left(1-\dfrac{1}{Q} \right)+ O(\delta)$, this occurs if and only if $Q < 1$ when $\delta \approx 0$.
\end{lemma}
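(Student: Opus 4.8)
The plan is to pin down, for $\RO<1$, exactly when the polynomial $f$ of \eqref{def_f_endemic} has a root in the admissible interval $\bigl(0,\tfrac{Q}{\RO}\bigr)$ imposed by \eqref{P_condi}; roots of $f$ outside this interval do not correspond to equilibria, since $f$ was produced from \eqref{f_endemic} by clearing the factor $\tfrac{Q}{\RO}-x$, and one checks $f\bigl(\tfrac{Q}{\RO}\bigr)=B\bigl(\tfrac{Q}{\RO}\bigr)^{2}>0$, so the endpoint is never a root. Writing $x=P$ as in \eqref{def_f_endemic}, a sub-threshold endemic equilibrium thus amounts to a solution $x\in\bigl(0,\tfrac{Q}{\RO}\bigr)$ of $Ax + B\,x^{2}/\bigl(\tfrac{Q}{\RO}-x\bigr) = 1-\tfrac{1}{\RO}$, i.e.\ of \eqref{f_endemic} after dividing by the positive factor $\tfrac{Q}{\RO}-x$.

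For \emph{necessity} ($AQ\ge 0\Rightarrow$ no sub-threshold equilibrium) the argument is immediate, and is really the crux of the proof. Since $Q>0$, the hypothesis says $A\ge 0$. When $\RO<1$ the right-hand side $1-\tfrac{1}{\RO}$ is negative, whereas for every $x\in\bigl(0,\tfrac{Q}{\RO}\bigr)$ we have $\tfrac{Q}{\RO}-x>0$, hence $Ax\ge 0$ and $B\,x^{2}/\bigl(\tfrac{Q}{\RO}-x\bigr)>0$ (recall $B>0$), so the left-hand side is strictly positive, a contradiction. Thus there is no endemic equilibrium with $\RO<1$. This also disposes of the borderline case $AQ=0$ in one stroke, so the separate second-derivative computation alluded to above is not actually needed.

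For \emph{sufficiency} ($AQ<0\Rightarrow$ a sub-threshold equilibrium exists) I would assume $A<0$, observe that then $a=B-A>B>0$ automatically (so $f$ is convex), and follow $f$ as $\RO$ decreases through $1$, using continuity of the coefficients \eqref{abc} in $\RO$. At $\RO=1$ we have $c=0$ and $b=AQ<0$; hence for $\RO<1$ close to $1$, $c=\tfrac{Q}{\RO}\bigl(\tfrac{1}{\RO}-1\bigr)>0$ is small and positive, $b$ stays negative, $a>0$ is fixed, and $f\bigl(\tfrac{Q}{\RO}\bigr)=B\bigl(\tfrac{Q}{\RO}\bigr)^{2}>0$. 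So $f(0)=c>0$, $f'(0)=b<0$, the vertex abscissa $-b/(2a)\to -AQ/(2a)\in(0,Q)$, and the vertex value $c-b^{2}/(4a)\to -(AQ)^{2}/(4a)<0$; since $-b/(2a)<\tfrac{Q}{\RO}$ for $\RO$ near $1$, $f$ must have a root in $\bigl(0,\tfrac{Q}{\RO}\bigr)$ (in fact two, the smaller one tending to $0$ as $\RO\to 1^{-}$), which is a genuine positive equilibrium. Equivalently, one may invoke the implicit function theorem at $(x,\RO)=(0,1)$ exactly as set up above: $G_{x}(0,1)=AQ\ne 0$ yields a bifurcating branch with slope $x'(1)=1/A<0$, which therefore opens into the region $\RO<1$.

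For the closing claim, \eqref{abc} gives $A=\bigl(1-\tfrac{1}{Q}\bigr)+O(\delta)$, so for $\delta\approx 0$ the sign of $A$ is that of $1-\tfrac{1}{Q}$; as $Q>0$, the condition $AQ<0$ is equivalent to $Q<1$, which is the form in the statement and also matches the first inequality of \eqref{cond_R0.lt.1}. I do not expect a genuine obstacle here: the only thing to be careful about is keeping the admissibility window $0<x<\tfrac{Q}{\RO}$ visible throughout, so that ``a solution of $f(x)=0$'' really means ``an endemic equilibrium'', together with the short continuity argument in the sufficiency direction. Once the positivity observation $Ax + B\,x^{2}/\bigl(\tfrac{Q}{\RO}-x\bigr)>0$ is spotted, necessity is trivial and everything else is routine.
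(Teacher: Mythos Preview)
Your argument is correct. The approaches differ mainly in the \emph{necessity} direction. The paper argues via the implicit function theorem at $(x,\RO)=(0,1)$: the branch of positive equilibria through that point has slope $x'(1)=1/A$, and when $A>0$ the branch opens into $\RO>1$; combined with the uniqueness for $\RO>1$ (Lemma~\ref{lemma_R0.gt.1}), this is meant to exclude sub-threshold equilibria, with the degenerate case $A=0$ handled separately by a second-derivative computation that the paper only alludes to. Your necessity argument is more direct and, in fact, tighter: going back to the rational form \eqref{f_endemic} and observing that for $A\ge 0$, $\RO<1$ and $x\in(0,Q/\RO)$ the left-hand side is strictly positive while the right-hand side is negative rules out \emph{all} admissible roots in one line, without any appeal to the branch structure or to Lemma~\ref{lemma_R0.gt.1}, and absorbs the borderline $A=0$ automatically. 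For \emph{sufficiency} the two arguments coincide in spirit: the paper invokes the implicit function theorem to get a backward branch when $A<0$, while you give the equivalent explicit quadratic analysis (convexity from $a=B-A>0$, $b\to AQ<0$, vertex value $\to -(AQ)^2/(4a)<0$, and the vertex staying inside $(0,Q/\RO)$), which makes the existence of two admissible roots for $\RO$ just below $1$ transparent. Your care in restricting attention to the admissibility window $(0,Q/\RO)$ throughout is a useful clarification that the paper leaves implicit.
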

We now find which are exactly the values of $\RO$ for which such solution exist.

Since $\RO < 1$, we have
$$ f(0) >0,\qquad f\left(\frac{Q}{\RO}\right) > 0, $$
and we ask whether there exist two roots of $f(x) = 0$ in $\left(0,\dfrac{Q}{\RO}\right)$.

This occurs if and only if
$$ a > 0,\; b < 0,\; b^2 > 4ac,\; -\frac{b}{2a} < \frac{Q}{\RO}.$$

Through some computations, we see that, using the definitions in  \eqref{abc}, the conditions are equivalent, for $\delta \approx 0$, to
\begin{equation}
\label{conds}
\left\{\begin{aligned}
\nu Q - 1 + \frac1Q &> 0,  \\
\RO &< 2 - Q , \\
\RO &> Q - 2 \nu Q^2,  \\
(\RO - Q)^2 &> 4 \nu Q^2 (1-\RO).
\end{aligned}
\right.\end{equation}
As a consequence of Lemma \ref{lem_backward_bif}, we can assume $Q < 1$, so that the first two conditions are immediately satisfied.

Finally, the conditions can be rewritten as
\begin{equation}
\label{condfin}
\left\{\begin{aligned}
Q &< 1, \\
\RO &> Q (1- 2 \nu Q), \\
(\RO - Q)^2 &> 4 \nu Q^2 (1-\RO).
\end{aligned}
\right.\end{equation}

If we compute explicitly the third condition, we obtain:
$$
\RO^2 + 2Q(2\nu Q - 1) \RO +Q^2(1-4\nu) > 0,
$$
whose discriminant $\Delta$ is computed as
$$\begin{aligned}
    \dfrac{\Delta}{4} = Q^2 (2\nu Q - 1)^2 - Q^2 (1 - 4\nu)
                    = Q^2 \left(4 \nu^2 Q^2 - 4 \nu Q + 4 \nu \right)
                    =  4\nu Q^3 \left( \nu Q - 1 + \dfrac{1}{Q}\right). 
\end{aligned}$$

We notice that $\Delta > 0$ due to the first condition of \eqref{condfin}. Thus we obtain
$$
\RO < Q (1- 2 \nu Q) - \sqrt{\frac{\Delta}{4}}\quad \text{ or }\quad \RO >Q (1- 2 \nu Q)+\sqrt{\frac{\Delta}{4}}.
$$
If $\RO$ satisfies  the first inequality, it cannot satisfy the second condition of \eqref{condfin}; hence, we   obtain that the second and third conditions of \eqref{condfin} are equivalent to
$$
\RO >  Q (1- 2 \nu Q) +\sqrt{4\nu Q^3 \left( \nu Q - 1 + \dfrac{1}{Q}\right) }.
$$
In summary, we obtain (in the limit of $\delta \approx 0$) that two endemic equilibria exist if and only if
\begin{equation}
\label{cond_endemic}
\left\{\begin{aligned}
    Q <& \; 1, \\
   1> \RO >&  \;Q (1-2\nu Q) +\sqrt{4\nu Q^3 \left( \nu Q - 1 + \dfrac{1}{Q}\right)}.  
\end{aligned}
\right.\end{equation}
We can then rewrite \eqref{cond_endemic} in terms of the parameters, obtaining
\begin{align}
    \alpha \nu >&\; \dfrac{\gamma_2}{\gamma_1 }, \\
  1>  \RO >& \; 2 \dfrac{\gamma_2}{\gamma_1 \alpha \nu} \left(\dfrac{1}{2} - \dfrac{\gamma_2}{\gamma_1 \alpha } + \sqrt{ \nu - \dfrac{\gamma_2}{\gamma_1 \alpha}\left(1- \dfrac{\gamma_2}{\gamma_1 \alpha}\right) } \right).
\end{align}
\end{itemize}
One could repeat the computations for positive values of $\delta$ and $\varepsilon$, but the result would be very cumbersome.

\bibliographystyle{abbrv}
\bibliography{bib}
\end{document}